\documentclass[11pt]{amsart}

\usepackage{texmaX,semmaX,semtkzX}
\usepackage{makecell}
\usepackage{multirow}

\usepackage{booktabs} 
\usepackage{mathtools}
\usepackage{amssymb}
\usepackage{listings}

\usepackage{verbatim}  
\usepackage{enumitem}

\usepackage{hyperref}
\newcommand{\res}{\operatorname{res}}
\usepackage{float}

\usepackage{lmodern}
\usepackage{minted} 

\usepackage{listings}
\usepackage[T1]{fontenc}

\usepackage[table]{xcolor}

\usepackage[margin=1in]{geometry}

\usepackage{amsthm}

\DeclareFontFamily{U}{wncy}{}
\DeclareFontShape{U}{wncy}{m}{n}{<->wncyr10}{}
\DeclareSymbolFont{mcy}{U}{wncy}{m}{n}
\DeclareMathSymbol{\Sha}{\mathord}{mcy}{"58}

\DeclareMathOperator{\Ker}{Ker}

\usepackage{tikz}
\usetikzlibrary{arrows.meta}
\usetikzlibrary{arrows.meta}
\usetikzlibrary{positioning, arrows.meta}

\usepackage[utf8]{inputenc}

\usepackage{placeins}

\usepackage{float}
\usepackage{caption}

\usepackage{xcolor}
\usepackage{ulem}

\begin{document}

\title{Nontrivial torsion in the Tate--Shafarevich group of elliptic curves via visibility and twists}
\author{Asuka Shiga}
\address{Mathematical Institute, Graduate School of Science, Tohoku University, 6-3 Aramaki Aza Aoba, Aoba-ku, Sendai, Miyagi 980-8578, Japan.}
\email{otheiio323.com@gmail.com}

\begin{abstract}
Let $\ell$ be an odd prime. We study the visibility theorem for  certain elliptic curves over $\mathbb{Q}$ with additive reduction at $\ell$, and deduce the existence of nontrivial $\ell$-torsion in $\Sha(E^D/\mathbb{Q})$ for suitable quadratic twists $E^D$. As an application for $\ell=3$, we exhibit pairs of non-isomorphic elliptic curves with the same BSD invariants, Kodaira symbols, and minimal discriminants, whose Tate--Shafarevich groups are isomorphic and have nontrivial $3$-primary parts.

\end{abstract}

\keywords{visibility, Tate--Shafarevich group, elliptic curves, quadratic twists, Selmer groups, bad reduction}

\maketitle

\setcounter{tocdepth}{2}
\tableofcontents

\section{Introduction}
Let $\ell$ be an odd prime number. Let \(E/\mathbb{Q}\) be a certain elliptic curve without a degree $\ell$-isogeny defined over \(\mathbb{Q}\). We produce square-free integers \(D\) such that \(\Sha(E^D/\mathbb{Q})[\ell]\) is nontrivial, where \(\Sha(E^D/\mathbb{Q})\) denotes the Tate--Shafarevich group of the quadratic twist \(E^D/\mathbb{Q}\) and \(\Sha(E^D/\mathbb{Q})[\ell]\) its \(\ell\)-torsion subgroup. For elliptic curves admitting a degree \(\ell\)-isogeny defined over \(\mathbb{Q}\), the subgroup \(\Sha(E^D/\mathbb{Q})[\ell]\) can be controlled via the Tamagawa ratio. The situation is treated in the literature (\cite{Bhargava}, \cite{Ariel}). However, we focus on the case where there is no isogeny of degree $\ell$ and $E/\mathbb{Q}$ has bad reduction at $\ell$, a situation that cannot be handled using the Tamagawa ratio.

Cremona and Mazur introduced the notion of visibility. In the examples of \cite{Ma}, the authors observe that certain nontrivial elements of the Tate--Shafarevich group can be explained by the difference between the Mordell--Weil ranks of $E$ and an elliptic curve $F$ with the same mod $3$ Galois representation, namely $E[3] \cong F[3]$ as $G_{\mathbb{Q}}$-modules. Agashe and Stein proved that, for a suitable pair \((E,F)\) of elliptic curves, a positive difference between the Mordell--Weil ranks of \(F\) and \(E\) yields nontrivial elements of \(\Sha(E/\mathbb{Q})\) via visibility, unconditionally, by checking certain local conditions; see \cite{visibility}, Theorem~\ref{sha}. Agashe and Biswas relaxed the hypotheses involving the ambient abelian variety by using the Kummer theory of Néron models \cite{2013}. However, when one is interested in the $\ell$-primary part of the Tate--Shafarevich group for primes $\ell$ dividing the conductor of $E$ or $F$, their theorem does not apply directly. By showing $\operatorname{res}_{\ell} \circ \varphi = 0$ where $\varphi$ is the map defined in Section \ref{phidef} which plays a crucial role in the theory of visibility, Fisher showed that the theorem of Agashe--Stein applies in the situation where $E$ and $F$ have split multiplicative reduction at $\ell$, or where $E$ (resp. $F$) has non-split multiplicative reduction and $F$ (resp. $E$) has good reduction \cite{Fisher}. By contrast, there is no general theory that covers all cases of additive reduction.

We prove the following statement, which can be handled provided that $E(\mathbb{Q}_{\ell})[\ell]=0$.

Note that the condition $E^D(\Bbb{Q}_{\ell})[\ell]=0$ alone can be satisfied by choosing appropriate $D$ by Remark \ref{localtorsion}.

\begin{theorem}[=Theorem \ref{main}, A visibility theorem for twisted elliptic curves, including primes of bad reduction]
Let $E,F$ be elliptic curves over $\mathbb{Q}$.
Fix an odd prime number $\ell$.
Assume that $(E,F)$ satisfies the following conditions:
\begin{enumerate}
  \item $E[\ell]=F[\ell]$ as $G_{\mathbb{Q}}$-modules.

  \item \leavevmode
  $\displaystyle
  \begin{aligned}[t]
  &\gcd\!\left(
    \ell,\;
    \prod_{p<\infty} c_{E/\mathbb{Q}_p}\cdot c_{F/\mathbb{Q}_p}
  \right)=1, E(\mathbb{Q}_\ell)[\ell]=0, \operatorname{rank}(F/\mathbb{Q})-\operatorname{rank}(E/\mathbb{Q})\ge 2.
  \end{aligned}
  $
\end{enumerate}

Then, if $D\in \mathbb{Z}$ is a square-free integer such that $(E^D,F^D)$ satisfies {\rm(2)}, we have
\[
\Sha(E^D/\mathbb{Q})[\ell]\neq 0.
\]
\end{theorem}

Under the Tamagawa number conditions in (1) and (3), the parity of \(\operatorname{rank}(F/\mathbb{Q})-\operatorname{rank}(E/\mathbb{Q})\) is restricted. The existing visibility theorems apply only when this difference is even. See Theorem \ref{parityeven}.

As an application, we can produce pairs of non-isomorphic elliptic curves with isomorphic non-trivial Tate–Shafarevich groups and other invariants related to the BSD conjecture. Let us define
\[
\mathrm{BSD}(E/\mathbb{Q})
:=
\bigl(L(E,s),\,E(\mathbb{Q}),\,\mathrm{Reg}(E/\mathbb{Q}),\,\Omega_E,\,(c_{E/\mathbb{Q}_p})_p,\,\Sha(E/\mathbb{Q})\bigr),
\]
where $L(E,s)$ is the Hasse--Weil $L$-function, $\Omega_E$ is the real period of a global minimal differential,
$(c_{E/\mathbb{Q}_p})_p$ are the Tamagawa numbers. In \cite{shiga}, the author proved that there exist infinitely many pairs $(E_1,E_2)$ of non-isomorphic elliptic curves over $\mathbb{Q}$ such that $j(E_1)\neq j(E_2)$, $\mathrm{BSD}(E_1/\mathbb{Q})=\mathrm{BSD}(E_2/\mathbb{Q})$, and the Kodaira symbols at every prime and the minimal discriminants are the same. Here, “=” between groups means that the groups are isomorphic as groups. Explicit examples of such pairs are listed in the following table (see \cite{shiga}). Let \(E_1 \coloneqq 38025.ck1\) and \(E_2 \coloneqq 38025.ck2\); they are degree $2$ isogenous over \(\mathbb{Q}\).

\FloatBarrier
{\scriptsize
 \begin{table}[htbp]
\centering
\label{tab:elliptic_curves_38025_b}
\begin{tabular}{lll}
    \toprule
    \textbf{Elliptic curve} & $E^D_1: y^2=x^3+25350Dx^2+2471625D^2x$ & $E^D_2: y^2=x^3-50700Dx^2+632736000D^2x$ \\
    \midrule
 j-invariant& $257^3$  & $17^3$  \\  
  
    Mordell--Weil group & 
    $\mathbb{Z}/2\mathbb{Z}$ & $\mathbb{Z}/2\mathbb{Z}$ \\
    Regulator & 1 & 1 \\
    Real period & $\dfrac{0.209\cdots}{\sqrt{D}}$ & $\dfrac{0.209\cdots}{\sqrt{D}}$ \\
    Tamagawa number & $2(p=3),2(p=5), 2(p=13), 2(p=D)$ & $2(p=3), 2(p=5), 2(p=13), 2(p=D)$\\

   Kodaira symbol  & $I_0^*(p=3), III^*(p=5), III^*(p=13)$,  & $I_0^*(p=3), III^*(p=5), III^*(p=13)$ \\

    &  $I_0^* (p=D)$ &  $I_0^* (p=D)$\\        
     \rowcolor{gray!20}  $\Sha(E^D_i/\mathbb{Q})[2^{\infty}]$ & 0 & 0 \\
 \rowcolor{gray!20} $\Sha(E^D_i/\mathbb{Q})$ & $\dagger$ & $\dagger$ \\
    \bottomrule
\end{tabular}
\caption{Pairs of elliptic curves sharing the BSD invariants, Kodaira symbols and minimal discriminants. Here, $D>0$ is a prime number with $D\equiv 1 \pmod{8}$, $\bigl(\tfrac{D}{5}\bigr)=-1$, $\bigl(\tfrac{D}{13}\bigr)=1$, and $\gcd(D,2\cdot3\cdot5\cdot13)=1$.}
\end{table}}

\FloatBarrier

Both \(E_1\) and \(E_2\) have additive reduction of type \(I_0^*\) at \(3\). For the row marked \(\dagger\), \cite{shiga} shows that
\(\Sha(E_1^D/\mathbb{Q}) \cong \Sha(E_2^D/\mathbb{Q})\)
by choosing \(D\) such that
\(\Sha(E_1^D/\mathbb{Q})[2^{\infty}] = \Sha(E_2^D/\mathbb{Q})[2^{\infty}] = 0\).

We give a mechanism that produces examples with $\Sha(E_i^D/\mathbb{Q})[3]\neq 0 \ (i=1,2)$ in the Table~1
(see also Table~2 in Example \ref{last} for $D=6977$).

\begin{theorem}

Let \(E_1\) and \(E_2\) be the elliptic curves with LMFDB labels \(38025.ck1\) and \(38025.ck2\), respectively, and let \(F\) be the elliptic curve with LMFDB label \(38025.i1\) \cite{lmfdb}.  
Let \(D\) be a square-free integer with \(D\equiv 2 \pmod{3}\), and suppose that  
\[
\operatorname{rank}( F^D/\mathbb{Q})-\operatorname{rank} (E_1^D/\mathbb{Q}) \ge 2.
\]
Then \(\Sha(E_1^D/\mathbb{Q})[3]\neq 0\).

In particular, for \(D=6977\) and \(D=23297\), we have \(\Sha(E_1^D/\mathbb{Q})[3]\neq 0\). Moreover, the pair \((E_1^D, E_2^D)\) satisfies $\text{BSD}(E_1^D/\Bbb{Q})=\text{BSD}(E_2^D/\Bbb{Q})$ and has the same Kodaira symbols at all primes and the same minimal discriminant, yet \(j(E_1)\neq j(E_2)\).

\end{theorem}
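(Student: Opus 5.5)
The plan is to deduce the statement from the main visibility theorem (Theorem \ref{main}) applied to the pair $(E,F)=(E_1,F)$ with $\ell=3$. Since the rank hypothesis is imposed directly on the twists $(E_1^D,F^D)$, it suffices to check the remaining parts of condition (2) for the twisted pair, together with condition (1). Condition (1) is a twist-invariant statement: if $E_1[3]\cong F[3]$ as $G_{\mathbb{Q}}$-modules, then twisting both by the same quadratic character gives $E_1^D[3]\cong F^D[3]$. So the first step is to verify $E_1[3]\cong F[3]$ for 38025.ck1 and 38025.i1, which have the same conductor. I would use a Sturm-type bound, as in Kraus--Oesterlé, to compare $a_p \bmod 3$ up to the required bound, and then check that the Weil pairings agree (or that the symplectic type is correct). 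Alternatively, this congruence may simply be recorded in the LMFDB/Cremona congruence data.

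Next comes the Tamagawa condition. The curves $E_1$ and $F$ have additive reduction at $3,5,13$. For each prime $p\mid 3\cdot5\cdot13\cdot D$, I would compute the Kodaira symbols of $E_1^D$ and $F^D$ and check that $3\nmid c_p$. For $E_1^D$ the table gives $c_p\in\{1,2\}$. For $F^D$ I would run Tate's algorithm, noting that at $p\nmid 6\cdot 38025$ with $p\mid D$ both twisted curves have type $I_0^*$, so $c_p\le 4$. The real constraint is at the primes $3,5,13$, where the twist by $D$ can change the reduction type only through $D$ modulo squares in $\mathbb{Q}_p^\times$. The congruence $D\equiv 2 \pmod 3$ fixes the class of $D$ in $\mathbb{Q}_3^\times/\mathbb{Q}_3^{\times2}$ up to units. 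At $5$ and $13$ I would do a case check over the square classes, or observe that the types $III^*$, $I_0^*$ and similar ones never have $3\mid c_p$. I expect that $c_p\in\{1,2,4\}$ in all cases.

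The main obstacle is the local condition $E_1^D(\mathbb{Q}_3)[3]=0$, and this is where $D\equiv2\pmod3$ should enter. Here I would use Remark \ref{localtorsion}: $E^D(\mathbb{Q}_3)[3]$ is the part of $E(\mathbb{Q}_3(\sqrt D))[3]$ on which the Galois group acts by $-1$. I would first try to show $E_1(K)[3]=0$ for $K=\mathbb{Q}_3(\sqrt{D})=\mathbb{Q}_3(\sqrt{-1})$, the unramified quadratic extension. The argument would go through the 3-division polynomial of $E_1$ over $\mathbb{Q}_3$: show it has no root $x$ in $K$ for which $y$ lies in $K$, for instance by a Newton polygon computation. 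Another route would use the additive type $I_0^*$, which forces $E(K)\cong E_0(K)\times(\text{component group})$ with $E_0/E_1$ of order prime to $3$, and then control $3$-torsion in the formal group over the (possibly ramified after twist) field. If this fails for unramified $K$, I would fall back on checking the twist $E_1^{-1}$ over $\mathbb{Q}_3$ directly with a computer calculation.

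With these checks done, Theorem \ref{main} gives $\Sha(E_1^D/\mathbb{Q})[3]\neq0$. For $D=6977$ and $D=23297$ I would note that $D\equiv2\pmod3$, and compute the ranks to show that $\operatorname{rank}F^D\ge2$ and $\operatorname{rank}E_1^D=0$. For $E_1^D$ this uses $L(E_1^D,1)\ne0$ and Kolyvagin; for $F^D$ it uses two independent points found by search. The final BSD-equality assertions then follow from \cite{shiga} and Table~1, since these $D$ satisfy its congruence conditions, with the $\Sha$ isomorphism transported along the $2$-isogeny.
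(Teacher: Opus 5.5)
\textbf{Gap: the local condition at $3$.} Your overall plan matches the paper's. You apply Theorem~\ref{main} to the twisted pair $(E_1^D,F^D)$, take condition (1) from the twist-invariance of $E_1[3]\cong F[3]$, get $3\nmid c_p$ from Kodaira symbols via Proposition~\ref{tamagawa3}, check $E_1^D(\mathbb{Q}_3)[3]=0$, and import the BSD equality from \cite{shiga}. The gap is the step you yourself flag as the main obstacle, and both of your primary routes fail there.

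Your first route, proving $E_1(K)[3]=0$ for $K=\mathbb{Q}_3(\sqrt{D})=\mathbb{Q}_3(\sqrt{-1})$, cannot succeed. Since $2$ is invertible on $3$-torsion, $E_1(K)[3]=E_1(\mathbb{Q}_3)[3]\oplus E_1^{-1}(\mathbb{Q}_3)[3]$. The untwisted curve has $E_1(\mathbb{Q}_3)[3]\neq 0$, because the square class of $D=1$ is exactly the bad class $D\equiv 1\pmod 3$; that is why the congruence hypothesis on $D$ is there at all. Had this route worked, it would have shown both unit classes are fine, which should have been a warning sign.

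Your formal-group route rests on a false claim. For additive reduction, $E_0(K)$ modulo the kernel of reduction is $\tilde{E}_{\mathrm{ns}}(k)\cong k^{+}$. This is an elementary abelian $3$-group, not a group of order prime to $3$. Over the unramified field $K$ the kernel of reduction is torsion-free, since $e=1<3-1$. So any $3$-torsion lives exactly outside the kernel of reduction, and your argument would ``prove'' the false statement $E_1(K)[3]=0$.

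\textbf{How to repair it.} Your fallback survives. Every square-free $D\equiv 2\pmod 3$ lies in the class of $-1$ in $\mathbb{Q}_3^\times/(\mathbb{Q}_3^\times)^2$, so $E_1^D\cong E_1^{-1}$ over $\mathbb{Q}_3$ and a single local check suffices. You still need a method that detects torsion outside the formal group, which is what Pannekoek's criterion (Theorem~\ref{lem:RP}(1)) provides and what the paper uses:
\begin{enumerate}
\item Substituting $y\mapsto y+x$ in the minimal model $y^2+xy=x^3-x^2-13233492x+18531699291$ gives a model with all $a_i\in 3\mathbb{Z}_3$, $a_1=3$ and $a_2=-3$.
\item The twist is still minimal at $3$ because $3\nmid D$, and it has $a_2=(-3D-9)/4$.
\item The criterion gives $(E_1^D)_0(\mathbb{Q}_3)[3]\neq 0\iff a_2\equiv 6\pmod 9\iff D\equiv 1\pmod 3$.
\item Finally, $3\nmid c_3$ gives $E_1^D(\mathbb{Q}_3)[3]=(E_1^D)_0(\mathbb{Q}_3)[3]$.
\end{enumerate}

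\textbf{Smaller points.} The Sturm-bound check only yields $E_1[3]^{\mathrm{ss}}\cong F[3]^{\mathrm{ss}}$. You also need $E_1[3]$ to be irreducible, i.e.\ no rational $3$-isogeny; the paper checks that $\Phi_3(j(E_1),Y)$ has no rational root. The Weil-pairing comparison is unnecessary for an isomorphism of $G_{\mathbb{Q}}$-modules.

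In the Tamagawa step, Table~1 only covers the special $D$ of \cite{shiga}. You also do not treat $p=2$ when $\mathbb{Q}(\sqrt{D})/\mathbb{Q}$ ramifies there. In that case the $3$-part of the geometric component group is isomorphic to $E_1^D[3]^{I_2}=0$, since inertia acts by $-1$; the same holds for $F^D$. Hence $3\nmid c_2$.
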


\subsection{Structure of the paper}

We recall the theory of visibility over $\mathbb{Q}$ and prove that the elliptic curves $38025.\mathrm{ck}1$ and $38025.\mathrm{i}1$ are $3$-congruent. Historically, the visibility map has been constructed in two different ways. We observe that these two constructions in fact yield the same map (Remark \ref{commutative}), and point out that, within the existing framework of visibility, the rank difference must be at least two (Theorem \ref{parityeven}).

 In the case of elliptic curves over $\Bbb{Q}$, we relax the technical conditions to use the theory of visibility for bad primes by studying the relaxed version of $\ell$-torsion subgroup of the Tate--Shafarevich group. As an application, we prove that $\bigl( (38025.\mathrm{ck}1)^{D}, (38025.\mathrm{ck}2)^{D}\bigr)$ is a pair of elliptic curves sharing the same BSD invariants and non-trivial 3-part of the Tate--Shafarevich groups.

\section{Notation}

Let us fix the notation as follows:

\begin{itemize}

\item For an abelian group $A$ and a prime number $p$,
{\setlength{\jot}{1pt}
\[
  \begin{aligned}
    A[p]          &:= \{\,a \in A \mid pa = 0\}
                  &&\text{($p$-torsion subgroup)}\\
    A[p^{\infty}] &:= \{\,a \in A \mid \exists\,n\ge 1,\; p^{n}a = 0\}
                  &&\text{($p$-primary component)}
  \end{aligned}
\]
}

\item Let $\ell$ be a prime and let $E/\mathbb{Q}$ be an elliptic curve. We write
$\mathrm{Sel}^{\ell}(E/\mathbb{Q})$ for the $\ell$-Selmer group of $E$ over
$\mathbb{Q}$.

\item $\Sha(E/\mathbb{Q})$: the Tate--Shafarevich group of $E/\mathbb{Q}$.

\item $\#\Sha(E/\mathbb{Q})_{\mathrm{an}}$: the analytic order of the Tate--Shafarevich group of $E/\mathbb{Q}$, i.e.\ the order predicted by the BSD conjecture. Note that the LMFDB database lists the analytic order of the Tate–Shafarevich group, not the Tate–Shafarevich group itself.

\item Let $A/\Bbb{Q}$ be an abelian variety. We define the set of $\bar{\Bbb{Q}}/\Bbb{Q}$-twists of $A$ by
\[
\text{Twist}(A/\Bbb{Q})
:=
\frac{\{\,B/\Bbb{Q}\mid B \text{ is } \text{isomorphic to } A \ \text{over} \ \bar{\Bbb{Q}}\,\}}
{\Bbb{Q}\text{-isomorphism}}.
\]

Let $D$ be a square-free integer. We define the quadratic twist $A^D/\Bbb{Q}$ to be the image of the class of $D$
under the composite map
\[
\Bbb{Q}^{\times}/(\Bbb{Q}^{\times})^{2}
\;\xrightarrow{\ \sim\ }\;
H^{1}\!\left(\Gal(\bar{\Bbb{Q}}/\Bbb{Q}),\mu_{2}\right)
\longrightarrow
H^{1}\!\left(\Gal(\bar{\Bbb{Q}}/\Bbb{Q}),\Aut_{\bar{\Bbb{Q}}}(A)\right)
\;\xrightarrow{\ \sim\ }\;
\text{Twist}(A/\Bbb{Q}).
\]

The following is a special case and an explicit parameterization of the above definition of $A^D$.

\item Let \(E/\mathbb{Q}\) be an elliptic curve given by a minimal Weierstrass equation
\[
  y^2 + a_1xy + a_3y = x^3 + a_2x^2 + a_4x + a_6.
\]
Let $D$ be a square-free integer. Then a Weierstrass equation for $E^D$ is: 

\begin{align*}
E^D: y^2 &+ a_1xy +a_3y= \\ &= x^3 + \left(a_2 D + a_1^2 \frac{D-1}{4}\right) x^2 + \left(a_4 D^2 + a_1 a_3 \frac{D^2-1}{2} \right)x + \left(a_6 D^3 + a_3^2 \frac{D^3-1}{4}\right).
\end{align*}

\item For an elliptic curve $E/\Bbb{Q}$, $N_E$ denotes the conductor of $E$.

\item 
For a field $K$,  $\mathrm{Gal}(\overline{K}/K)$-module $M$, we abbreviate $H^1(\mathrm{Gal}(\overline{K}/K), M)$ to $H^1(K, M)$.

\item For an elliptic curve $E/\mathbb{Q}$, set
\[
E_0(\mathbb{Q}_p) \coloneqq 
\{\, P \in E(\mathbb{Q}_p) \mid \tilde{P} \text{ is non-singular} \} \ \text{where} \ \tilde{P} \ \text{denotes the reduction of}\  P\ \text{modulo} \ p. 
\]

$\mathcal{E}$: the Néron model of $E/\mathbb{Q}_p$.\par
$\mathcal{E}_0$: the identity component of $\mathcal{E}$.\par
$\tilde{\mathcal{E}}, \tilde{\mathcal{E}}_0$: the special fibers of $\mathcal{E}$ and $\mathcal{E}_0$ at $p$, respectively.\par

We define $\Phi_E$ to be the finite group scheme $\tilde{\mathcal{E}}/\tilde{\mathcal{E}}_0$.

Then
\[
c_{E/\mathbb{Q}_p}
= [E(\mathbb{Q}_p):E_0(\mathbb{Q}_p)]
= \#(\tilde{\mathcal{E}} / \tilde{\mathcal{E}}_0)(\mathbb{F}_p).
 \]

\item
Let $E/\mathbb{Q}$ be an elliptic curve and let $\tilde E$ denote its reduction modulo $p$.
Define
\[
\tilde{E}_{\mathrm{ns}}(\mathbb{F}_p)
\coloneqq
\{\,P\in \tilde{E}(\mathbb{F}_p)\mid P \text{ is nonsingular}\,\}.
\]
Let $E_1(\mathbb{Q}_p)$ denote the kernel of the reduction map
$E(\mathbb{Q}_p)\to \tilde{E}_{\text{ns}}(\mathbb{F}_p)$.

\item 
Fix a prime number $\ell$.
We say that abelian varieties $A$ and $B$ over $\mathbb{Q}$ are $\ell$-congruent
if
\[
A[\ell]=B[\ell]
\]
as $G_{\mathbb{Q}}$-modules.

\end{itemize}

\section{$\ell$-congruent elliptic curves}
Let $E_1$ be the elliptic curve with LMFDB label $38025.ck1$, let $E_2$ be the elliptic curve with LMFDB label $38025.ck2$. $E_1$ and $E_2$ are degree $2$ isogenous over $\Bbb{Q}$. Our goal is to show that $\Sha(E_1^D/\Bbb{Q})[3]\neq 0$ and this leads to the pair $(E_1^D, E_2^D)$ which shares the same BSD invariants and Kodaira symbols together with non-trivial $3$-part of Tate--Shafarevich groups. To prove that, we use another elliptic curve $F/\Bbb{Q}\coloneqq 38025.i1$, which is not isogenous to $E_1/\Bbb{Q}$. The goal of this section is to prove $E_1[3]=F[3]$.

\begin{definition}[Cor 9.19 of \cite{stein}, Sturm bound]
Let $\Gamma\subset \mathrm{SL}_2(\mathbb{Z})$ be a congruence subgroup of finite index
$m=[\mathrm{SL}_2(\mathbb{Z}):\Gamma]$, and let $k\in\mathbb{Z}_{\ge 1}$.
Let $p$ be a prime number.
For $f,g\in M_k(\Gamma,\Bbb{Z})$ with $q$-expansions
\[
f(z)=\sum_{n\ge 0} a_f(n)\,q^n,\qquad
g(z)=\sum_{n\ge 0} a_g(n)\,q^n,\qquad q=e^{2\pi i z},
\]
Here $M_k(\Gamma,\Bbb{Z})$ denotes the $\mathbb{Z}$-module of weight $k$ modular forms for $\Gamma$ with integer coefficients.

The Sturm bound at weight $k$ and level $\Gamma$ is
\[
B_\Gamma(k):=\Bigl\lfloor \frac{k}{12}\,[\mathrm{SL}_2(\mathbb{Z}):\Gamma] \Bigr\rfloor.
\]The integer $12$ is the weight of the Ramanujan cusp form of level $1$.

\end{definition}

Since $M_k(\Gamma,\mathbb{Z})$ is a finitely generated $\mathbb{Z}$-module, congruence modulo $p$ of two forms is determined by finitely many Fourier coefficients: there exists $B$ such that if
$a_f(n)\equiv a_g(n)\pmod{p}$ for all $0\le n\le B$, then the same congruence holds for all $n\ge 0$.
The following theorem provides a concrete choice of $B$.

\begin{theorem}[Cor 9.19 of \cite{stein}]
If
\[
a_f(n)\equiv a_g(n)\bmod{\ell}\quad \text{for all } 0\le n\le B_\Gamma(k),
\]
then $a_f(n)\equiv a_g(n)\bmod{\ell}\ \text{for all } n$.
\end{theorem}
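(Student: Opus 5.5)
The plan is to prove the classical Sturm theorem for integral forms via a valuation argument, reducing to the case $\Gamma=\mathrm{SL}_2(\mathbb{Z})$ with a norm construction.

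\textbf{Step 1: reduce to one form.} Put $h=f-g\in M_k(\Gamma,\mathbb{Z})$. The hypothesis becomes $\mathrm{ord}_\ell(h)>B_\Gamma(k)$, where $\mathrm{ord}_\ell(h)$ is the smallest $n$ with $\ell\nmid a_h(n)$ (and $\infty$ if no such $n$). The goal is to show $\mathrm{ord}_\ell(h)=\infty$. Assume instead that $\bar h:=h \bmod \ell$ is nonzero and that its $q$-order $n_0$ satisfies $n_0>\frac{k}{12}m$.

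\textbf{Step 2: pass to level one.} Let $\gamma_1,\dots,\gamma_m$ be right coset representatives for $\Gamma$ in $\mathrm{SL}_2(\mathbb{Z})$. Form the norm
\[
N(h)=\prod_{i=1}^m h|_k\gamma_i .
\]
This is a modular form of weight $km$ and level one. Its coefficients lie in $\mathbb{Z}[\zeta_M][1/M]$ for a suitable level $M$, because the conjugates $h|_k\gamma_i$ have cyclotomic Fourier coefficients. Fix a prime $\lambda\mid\ell$ in that ring. The factor with $\gamma_i=1$ contributes $\lambda$-adic $q$-order $n_0$. After scaling each remaining factor by a constant, each of the other factors has order at least $0$ modulo $\lambda$. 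Hence $\mathrm{ord}_\lambda(N(h))\ge n_0>\frac{km}{12}$ after rescaling, and $N(h)\not\equiv 0\pmod\lambda$.

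\textbf{Step 3: level-one Sturm bound.} A nonzero level-one form of weight $K$ with $\lambda$-integral coefficients has a reduction whose $q$-order is at most $K/12$. To see this, write the reduction in terms of $\Delta$ and the Eisenstein series $E_4,E_6$, and use that $\Delta$ has leading coefficient $1$. The valence formula then bounds the order of vanishing at $\infty$ of the reduction by $\dim M_K-1\le K/12$. Applying this with $K=km$ contradicts Step 2, so $\mathrm{ord}_\ell(h)=\infty$.

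\textbf{Main obstacle.} The delicate point is the integrality in Step 2: the conjugates $h|_k\gamma_i$ need not have $\ell$-integral coefficients when $\ell$ divides the level. The clean fix is to work with Katz's $q$-expansion principle, or with Stein's argument in \cite{stein}, which uses bounded denominators together with normalizing each factor by its $\lambda$-content. In this paper all relevant levels are prime to $\ell=3$ only after suitable twisting, so care is needed. Since the statement is quoted as Cor 9.19 of \cite{stein}, the final write-up will cite that argument for this integrality step.
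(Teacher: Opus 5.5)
Your outline (pass to $h=f-g$, form $\prod_i h|_k\gamma_i$ over coset representatives, rescale the non-identity factors to unit $\lambda$-content, and apply the level-one bound through the unitriangular integral basis $\Delta^jE_4^aE_6^b$) is the standard Sturm norm argument behind Stein's Cor.~9.19; the paper gives no proof of its own and only cites that result, so your approach is essentially the cited one. One correction to Step~2: do not invert $M$, since when $\ell\mid M$ there is then no $\lambda\mid\ell$ to choose. Instead take $\lambda\mid\ell$ in $\mathbb{Z}[\zeta_M]$ and use only that each $h|_k\gamma_i$ has coefficients in $\mathbb{Q}(\zeta_M)$ with bounded denominators; your rescaling then already handles $\ell\mid M$. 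This is exactly the case needed here, because the bound is applied at level $38025=3^2\cdot 5^2\cdot 13^2$, so your remark that twisting makes the level prime to $3$ is mistaken, though it does not affect the argument.
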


\begin{example}\label{10920}
When the modular form arises from an elliptic curve $E/\mathbb{Q}$, it has weight $2$ and level $\Gamma_0(N_E)$.
Moreover,
\[
[SL_2(\mathbb{Z}):\Gamma_0(N_E)]
=\#\mathbb{P}^1(\mathbb{Z}/N_E\mathbb{Z})
=\prod_{p^e\mid N_E}\#\mathbb{P}^1(\mathbb{Z}/p^e\mathbb{Z})=N_E\prod_{p\mid N_E}\left(1+\frac{1}{p}\right).
\]
\end{example}

\begin{theorem}[Brauer--Nesbitt, see Theorem~2.9 of \cite{Wiese2013}]\label{brauer}
Let $k$ be a field, and let $n\ge 2$ be an integer. Let $\rho_i \colon G \to \mathrm{GL}_n(k)$ $(i=1,2)$ be continuous semisimple representations.
Assume that at least one of the following holds:
\begin{enumerate}
\item the characteristic polynomials of $\rho_1(g)$ and $\rho_2(g)$ coincide for all $g\in G$;
\item $\mathrm{char}(k)=0$ or $\mathrm{char}(k)>n$, and $\mathrm{Tr}(\rho_1(g))=\mathrm{Tr}(\rho_2(g))$ for all $g\in G$.
\end{enumerate}
Then $\rho_1 \simeq \rho_2$.
\end{theorem}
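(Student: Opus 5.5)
This is the Brauer--Nesbitt theorem, which the paper quotes from \cite{Wiese2013}. I would prove it by reducing to the case of an algebraically closed field and then comparing Jordan--Hölder constituents.

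\textbf{Reduction to $\bar{k}$.} Replace $k$ by an algebraic closure $\bar k$. Semisimplicity is preserved under this extension when $k$ is perfect; in general one should work with the semisimplification. Two $k$-representations that become isomorphic over $\bar k$ are already isomorphic over $k$. This follows from a standard Noether--Deuring argument: the space $\mathrm{Hom}_G(\rho_1,\rho_2)$ is cut out by linear equations, and for infinite $k$ an invertible element over $\bar k$ yields one over $k$ by nonvanishing of the determinant polynomial. Finite $k$ is perfect, so Wedderburn theory handles that case directly. Continuity plays no role beyond finite-dimensionality, so I work with the $k[G]$-modules $V_1,V_2$.

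\textbf{Main step.} Let $A_i$ be the image of $\bar k[G]$ in $\mathrm{End}(V_i)$, and let $S_1,\dots,S_r$ be the pairwise non-isomorphic simple modules occurring in $V_1\oplus V_2$. Write $V_i\cong\bigoplus_j S_j^{m_{ij}}$; it suffices to show $m_{1j}=m_{2j}$ for all $j$. By Jacobson density (Burnside's theorem over $\bar k$), the map $\bar k[G]\to\prod_j \mathrm{End}(S_j)$ is surjective. So for each $j$ there is $e_j\in\bar k[G]$ acting as the identity on $S_j$ and as zero on $S_{j'}$ for $j'\neq j$.

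\textbf{Case (2).} Traces extend linearly to $\bar k[G]$, and
\[
\mathrm{Tr}(e_j\mid V_i)=m_{ij}\dim S_j .
\]
Hence $m_{1j}\dim S_j=m_{2j}\dim S_j$ in $\bar k$, so $(m_{1j}-m_{2j})\dim S_j\equiv 0$ modulo $\mathrm{char}(k)$. Since $m_{ij}\dim S_j\le n$ and $\mathrm{char}(k)=0$ or $\mathrm{char}(k)>n$, this forces $m_{1j}=m_{2j}$.

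\textbf{Case (1).} Here the traces only give the congruence modulo $p$, so I would use the whole characteristic polynomial. Characteristic polynomials do not extend linearly from $G$ to $\bar k[G]$, so one must argue through the identity $\det(1-tx)=\exp\bigl(-\sum_k \mathrm{Tr}(x^k)t^k/k\bigr)$, which is problematic in characteristic $p$. Instead I would use that equality of characteristic polynomials on $G$ gives equality of all coefficient functions, which are traces on exterior powers, $\mathrm{Tr}(g\mid\wedge^a V_1)=\mathrm{Tr}(g\mid\wedge^a V_2)$. These extend linearly in a suitable sense.

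The cleaner route is induction on $n$. Choose $j$ with $m_{1j}\neq m_{2j}$, say $m_{1j}>m_{2j}$, and set $x=1+(\lambda-1)e_j$. This element acts as the scalar $\lambda$ on $S_j$ and as $1$ on the other $S_{j'}$. The trouble is that $x$ lies in $\bar k[G]$, not in $G$. To get around this, I would use that the map $g\mapsto\det(t-\rho_i(g))$ is polynomial and compare the two sides as polynomial identities on the Zariski closure of the span. Concretely, $\det(t-\rho_i(y))$ for $y=\sum c_g g$ is a polynomial in the $c_g$. This does not follow formally from agreement on group elements alone, so this is the main obstacle.

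To resolve it, I would instead apply the standard trick of comparing multisets of eigenvalues for the commutative subalgebra generated by a single element. Following Curtis--Reiner, case (1) reduces to the determinant identity $\det(\rho_1)=\det(\rho_2)$ on $\bar k[G]$. That identity I would obtain by observing that the norm form on $A_i$ is determined multiplicatively, via the characteristic polynomials of the reduced norms, on a spanning set closed under multiplication. Evaluating at $x$ then gives $\lambda^{m_{1j}\dim S_j}=\lambda^{m_{2j}\dim S_j}$ for all $\lambda$, and hence $m_{1j}=m_{2j}$.
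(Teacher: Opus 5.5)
The paper gives no proof of this statement; it only cites Theorem~2.9 of \cite{Wiese2013}, so your argument has to stand on its own. Your case~(2) is essentially correct. Traces are linear, hence extend to $\bar k[G]$, and evaluating at $e_j$ gives $(m_{1j}-m_{2j})\dim S_j=0$ in $\bar k$ with $|(m_{1j}-m_{2j})\dim S_j|\le n<\operatorname{char}k$ (or $\operatorname{char}k=0$). This already covers the paper's application, where $k=\mathbb{F}_\ell$ with $\ell$ odd and $n=2$. One side remark on the reduction: for imperfect $k$, ``working with the semisimplification'' only yields an isomorphism of $\bar k$-semisimplifications, to which Noether--Deuring does not apply. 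You need in addition that distinct simple $k[G]$-modules have no common $\bar k$-composition factor; to see this, take an element of $k[G]$ acting as $1$ on one and as $0$ on the other.

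Case~(1), however, has a genuine gap, precisely at the point you yourself flag. The hypothesis concerns only $g\in G$. Your proposed fix, that the norm form on $A_i$ is ``determined multiplicatively on a spanning set closed under multiplication'', is not a valid principle. Take $G=\{1,g\}$ of order $2$ and $\operatorname{char}k\neq 2$: the representations $\mathbf{1}\oplus\mathbf{1}$ and $\mathrm{sgn}\oplus\mathrm{sgn}$ have equal determinants on all of $G$, but at $x=(1+g)/2$ their determinants are $1$ and $0$. The full characteristic polynomials on $G$ do determine $\det$ on $\bar k[G]$, but only through a genuine identity such as Amitsur's formula for $\det(1-t\sum_g c_g g)$ in terms of characteristic polynomials of products of group elements. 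You do not supply such an identity, so evaluating at $1+(\lambda-1)e_j$ is unjustified.

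The standard argument needs only traces plus a Frobenius descent:
\begin{enumerate}
\item Write $\det(t-\rho_i(g))=\prod_j P_j(g)^{m_{ij}}$ with $P_j(g)=\det(t-\rho_{S_j}(g))$.
\item Cancel common factors in the domain $\bar k[t]$ to get $\prod_j P_j(g)^{a_j}=\prod_j P_j(g)^{b_j}$ for all $g\in G$, with $\min(a_j,b_j)=0$.
\item Compare subleading coefficients: $\sum_j (a_j-b_j)\operatorname{Tr}\rho_{S_j}=0$ on $G$, hence on $\bar k[G]$.
\item Evaluate at an element acting as a rank-one idempotent on $S_j$ and as $0$ on every $S_{j'}$ with $j'\neq j$. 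Your $e_j$ would not do here, since $p$ may divide $\dim S_j$. This gives $a_j\equiv b_j \pmod p$, so $p$ divides both.
\item Since $f\mapsto f^p$ is injective on $\bar k[t]$, replace $(a_j,b_j)$ by $(a_j/p,b_j/p)$ and repeat. This descent forces $a_j=b_j=0$, i.e.\ $m_{1j}=m_{2j}$. In characteristic $0$ the trace step gives this at once.
\end{enumerate}
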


\begin{theorem}
Let $\ell$ be a prime number, and let $E$ and $F$ be elliptic curves over $\Bbb{Q}$.
Assume that $E$ admits no $\Bbb{Q}$-rational $\ell$-isogeny.
Let $f_E$ and $f_F$ be the modular forms associated to $E$ and $F$, respectively.
If $a_{f_E}(n)\equiv a_{f_F}(n)\bmod{\ell}$ for all integers $n\ge 1$, then
$E[\ell]\cong F[\ell]$ as $\mathrm{Gal}(\overline{\Bbb{Q}}/\Bbb{Q})$-modules.
\end{theorem}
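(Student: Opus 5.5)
The plan is to compare the two mod $\ell$ representations $\bar\rho_E,\bar\rho_F\colon G_{\mathbb{Q}}\to \mathrm{GL}_2(\mathbb{F}_\ell)$ on $E[\ell]$ and $F[\ell]$ through their characteristic polynomials. We first show these polynomials agree on a dense set of elements, hence on all of $G_{\mathbb{Q}}$. Then we use the hypothesis that $E$ has no $\mathbb{Q}$-rational $\ell$-isogeny to deal with semisimplicity, and conclude with Theorem \ref{brauer}(1). We use (1) rather than (2) so that the case $\ell=2$ is not excluded.

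First, let $S$ be the finite set of primes dividing $\ell N_E N_F$. For $p\notin S$, both $\bar\rho_E$ and $\bar\rho_F$ are unramified at $p$, by the criterion of N\'eron--Ogg--Shafarevich. For such $p$, the characteristic polynomial of $\bar\rho_E(\mathrm{Frob}_p)$ is $X^2-a_p(E)X+p \bmod \ell$, and similarly for $F$. This uses the Weil pairing, which gives $\det\bar\rho_E=\det\bar\rho_F=\chi_\ell$, the mod $\ell$ cyclotomic character. By modularity and Eichler--Shimura, $a_p(E)=a_{f_E}(p)$ and $a_p(F)=a_{f_F}(p)$. The hypothesis $a_{f_E}(p)\equiv a_{f_F}(p)\pmod{\ell}$ therefore shows that the two characteristic polynomials coincide at every $\mathrm{Frob}_p$ with $p\notin S$.

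Next we pass to all $g\in G_{\mathbb{Q}}$. Both representations factor through $\mathrm{Gal}(K/\mathbb{Q})$, where $K$ is the finite Galois extension $\mathbb{Q}(E[\ell],F[\ell])$, which is unramified outside $S$. By the Chebotarev density theorem, every element of $\mathrm{Gal}(K/\mathbb{Q})$ is the Frobenius of some $p\notin S$. Hence $\mathrm{charpoly}\,\bar\rho_E(g)=\mathrm{charpoly}\,\bar\rho_F(g)$ for all $g\in G_{\mathbb{Q}}$.

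The main subtlety is that Theorem \ref{brauer} requires both representations to be semisimple. For $E$ this follows from the hypothesis: a $G_{\mathbb{Q}}$-stable line in $E[\ell]$ would be the kernel of a $\mathbb{Q}$-rational $\ell$-isogeny, so $E[\ell]$ is irreducible and in particular semisimple. For $F$ we do not know this a priori, so we argue as follows.
\begin{enumerate}
\item Let $\bar\rho_F^{\mathrm{ss}}$ be the semisimplification of $\bar\rho_F$. It is semisimple and has the same characteristic polynomials as $\bar\rho_F$.
\item Apply Theorem \ref{brauer}(1) to $\bar\rho_E$ and $\bar\rho_F^{\mathrm{ss}}$. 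This gives $\bar\rho_E\simeq\bar\rho_F^{\mathrm{ss}}$.
\item If $F[\ell]$ were reducible, then $\bar\rho_F^{\mathrm{ss}}$ would be a direct sum of two characters. That would make $\bar\rho_E$ reducible, a contradiction.
\end{enumerate}
Hence $F[\ell]$ is irreducible, so $\bar\rho_F=\bar\rho_F^{\mathrm{ss}}\simeq\bar\rho_E$, that is, $E[\ell]\cong F[\ell]$ as $\mathrm{Gal}(\overline{\mathbb{Q}}/\mathbb{Q})$-modules.

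I expect this semisimplicity step to be the only point needing care. The rest is the standard combination of Eichler--Shimura with Chebotarev. In practice, the hypothesis "for all $n\ge 1$" is used only at primes $p\notin S$, and it is supplied by the Sturm bound computation.
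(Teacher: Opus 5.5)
Your proof is correct and follows the same route as the paper. Frobenius characteristic polynomials agree at primes $p\nmid \ell N_EN_F$, Chebotarev extends this to all of $G_{\mathbb{Q}}$, and Brauer--Nesbitt (Theorem~\ref{brauer}(1)) together with the irreducibility of $E[\ell]$ gives the isomorphism. Your explicit argument that $F[\ell]$ is itself irreducible, because $\bar\rho_F^{\mathrm{ss}}\simeq\bar\rho_E$ is irreducible, fills in the step the paper only asserts when it writes $(F[\ell])^{\mathrm{ss}}=F[\ell]$.
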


\begin{proof}
Let $a_{p}(E)\coloneqq p+1-\#\tilde{E}(\Bbb{F}_{p})$ and 
$\overline{\rho}_{E,\ell}$ be the mod $\ell$ representation of ${\rho}_{E,\ell}$. For every good prime $p\nmid N\ell$, the congruence $a_p(E)\equiv a_p(F)\bmod{\ell}$
implies that the characteristic polynomials of $\bar{\rho}_{E,\ell}(\Frob_p)$ and
$\bar{\rho}_{F,\ell}(\Frob_p)$ coincide modulo $\ell$, since
\[
\det\!\big(1 - X\,\bar{\rho}_{E,\ell}(\Frob_p)\bigr) \equiv 1-a_p(E)X+pX^2 \bmod{\ell},
\]
and similarly for $F$. 
By the Chebotarev density theorem, the characteristic polynomials of
$\bar{\rho}_{E,\ell}(g)$ and $\bar{\rho}_{F,\ell}(g)$ coincide for all $g\in G_{\mathbb Q}$.
Hence, by Brauer--Nesbitt (Theorem~\ref{brauer} (1)), we have
$(E[\ell])^{ss}\cong (F[\ell])^{ss}$.
Since $E$ does not admit a $\mathbb Q$-rational isogeny of degree $\ell$,
the $G_{\mathbb Q}$-modules $E[\ell]$ is irreducible, hence semisimple,
so $(E[\ell])^{ss}=E[\ell]$ and $(F[\ell])^{ss}=F[\ell]$. Therefore $E[\ell]\cong F[\ell]$.

\end{proof}

Using an isomorphism $\theta: E[\ell]\xrightarrow{\sim} F[\ell]$, we define $J$ by the following gluing construction.

\begin{definition}\label{J}
Let $E$ and $F$ be elliptic curves, and let
\[
\theta : E[\ell] \xrightarrow{\sim} F[\ell]
\]
be an isomorphism of $\mathrm{Gal}(\overline{\Bbb{Q}}/\Bbb{Q})$-modules. Set
\[
\Delta \coloneqq \{ (a,-\theta(a)) \mid a \in E[\ell] \},
\]
and define the gluing 
\[
J \coloneqq (E \times F)/\Delta.
\]
\newcommand{\qbar}[1]{\overline{\mkern2mu #1 \mkern2mu}}
Then $J$ is a $2$-dimensional abelian variety. Let
\[
\qbar{(\,\cdot\,)}: E\times F \longrightarrow J
\]
be the quotient map. Define
\[
i_E:E\to J,\quad P\mapsto \qbar{(P,0)},\qquad
i_F:F\to J,\quad Q\mapsto \qbar{(0,Q)}.
\]
are injective homomorphisms, and we regard $E$ and $F$ as abelian subvarieties of $J$ via $i_E$ and $i_F$. Using this construction, we can verify that $i_E(P) = i_F(\theta(P))$ for all $P \in E[\ell]$. In other words, the subgroups $E[\ell]$ and $F[\ell]$ are identified inside $J$ via $\theta$.

\end{definition}

\begin{example}[$\ell=3$]\label{congruent}
Let \(f_{\mathrm{ck1}}\) and \(f_{\mathrm{i1}}\) be the modular forms attached to the elliptic curves \[E_1\coloneqq38025.\mathrm{ck}1 \  \text{and} \  F\coloneqq38025.\mathrm{i}1,\] respectively. Their \(q\)-expansions (up to \(O(q^{20})\)) are
\[
\begin{aligned}
f_{\mathrm{ck1}}(q) &= q + q^{2} - q^{4} - 3q^{8} - 2q^{11} - q^{16} - 6q^{19} + O\!\left(q^{20}\right),\\
f_{\mathrm{i1}}(q)  &= q - 2q^{2} + 2q^{4} + 3q^{7} - 5q^{11} - 6q^{14} - 4q^{16} + 3q^{17} + 6q^{19} + O\!\left(q^{20}\right).
\end{aligned}
\]
Moreover, these forms are congruent modulo \(3\); that is,
\[
f_{\mathrm{ck1}} \equiv f_{\mathrm{i1}} \pmod{3}.
\]

The Sturm bound for weight \(2\) and level \(N=38025\) is  
\[
\left\lfloor \frac{2\,[SL_2(\mathbb{Z}):\Gamma_0(N)]}{12}\right\rfloor
=\left\lfloor \frac{1}{6}\,N\prod_{p\mid N}\left(1+\frac{1}{p}\right)\right\rfloor
=10920,
\]
where we used Example~\ref{10920} for the index formula. Using SageMath, we verified that the Fourier coefficients of \(f_{\mathrm{ck1}}\) and \(f_{\mathrm{i1}}\) are congruent modulo \(3\) up to this bound.

Thus, $E_1[3]$ and $F[3]$ have isomorphic semisimplifications. 

\end{example}

\begin{theorem}(Section 3 of \cite{Sutherland})

Let $E/\mathbb{Q}$ be an elliptic curve and let $\Phi_N(X,Y)$ denote the classical modular polynomial of level $N$. Then $E$ admits a cyclic $n$-isogeny defined over $\mathbb{Q}$ if and only if $\Phi_{n}(j(E),Y)$ has a linear factor in $\Bbb{Q}[Y]$. 
\end{theorem}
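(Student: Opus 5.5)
The plan is to identify rational $n$-isogenies with rational points on the modular curve $X_0(n)$, whose affine part is cut out by $\Phi_n$. I will use the moduli interpretation of $\Phi_n$:
\[
\Phi_n(X,Y)=\prod_{C}\bigl(X - j(E_\tau/C)\bigr)
\]
over the cyclic subgroups $C\subset E_\tau$ of order $n$, evaluated at $X=j(\tau)$. Put differently, the roots of $\Phi_n(j(E),Y)$ in $\bar{\mathbb{Q}}$ are exactly the $j$-invariants $j(E/C)$, with $C$ ranging over cyclic subgroups of order $n$ of $E[n]$, counted with multiplicity. This is the standard fact proved via $\Phi_n(j(\tau),Y)=\prod_{\gamma}(Y-j(\gamma\tau))$, with $\gamma$ running over the coset representatives $\begin{pmatrix} a&b\\0&d\end{pmatrix}$, $ad=n$, $\gcd(a,b,d)=1$. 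I would recall this rather than reprove it.

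\emph{Forward direction.} Suppose $\phi\colon E\to E'$ is a cyclic $n$-isogeny defined over $\mathbb{Q}$. Then $E'$ is defined over $\mathbb{Q}$, so $j(E')\in\mathbb{Q}$. Its kernel $C$ is a cyclic subgroup of order $n$, so $j(E')=j(E/C)$ is a root of $\Phi_n(j(E),Y)$. Hence $Y-j(E')$ is a linear factor in $\mathbb{Q}[Y]$.

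\emph{Converse.} Suppose $j'\in\mathbb{Q}$ is a root. Then $j'=j(E/C)$ for some cyclic $C$ of order $n$. Since $G_{\mathbb{Q}}$ permutes the cyclic subgroups, it permutes the roots, and $\sigma$ sends $j(E/C)$ to $j(E/\sigma C)$.

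\begin{itemize}
\item If $j'$ is a simple root, then every $\sigma$ fixes $C$. Thus $C$ is $G_{\mathbb{Q}}$-stable, and $E\to E/C$ is defined over $\mathbb{Q}$ by the standard descent of quotients by rational subgroups.
\item If $j'$ is a repeated root, then several $C$ give isomorphic quotients. This is the main obstacle, and it happens only when $j(E)$ lies on the locus where $\Phi_n(j(E),Y)$ has repeated roots. That includes the CM values $j=0,1728$, where $\Aut(E)$ is larger, and certain CM $j$-invariants in general.
\end{itemize}

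For the repeated-root case, I would first use the modular-curve viewpoint. A rational root $j'$ with $\Phi_n(j(E),j')=0$ gives a rational point on the plane model of $X_0(n)$. At nonsingular points of the plane model this lifts uniquely to a rational point of $X_0(n)$, whose moduli interpretation is a pair $(E_0,C)$ with $C$ rational, and a twist of $E_0$ gives $E$ with a rational cyclic $n$-subgroup. Isogenies are insensitive to twisting when $j\neq0,1728$, because the kernel $C$ is preserved by $\pm1$. The singular points and the values $j=0,1728$ are the genuinely delicate cases. There I would appeal to the argument in Section~3 of \cite{Sutherland}, or restrict the statement, since the cited source handles exactly this.

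For the paper's application, $\ell=3$ and $E_1=38025.\mathrm{ck}1$ with $j=257^3$, which is neither $0$ nor $1728$. I would check directly that $\Phi_3(257^3,Y)$ has no rational root. In that case the easy forward direction alone suffices: no linear factor means no rational $3$-isogeny. This is the only direction needed to obtain irreducibility of $E_1[3]$, and hence $E_1[3]\cong F[3]$ from Example~\ref{congruent} and the preceding theorem.
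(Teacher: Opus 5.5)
Your forward direction and the simple-root case of the converse are correct. You are also right that only the forward direction is needed for Example~\ref{isogeny}. The gap is the repeated-root case of the converse, which you defer to Section~3 of \cite{Sutherland}. No citation can close it, because the statement as written is false there.

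Take $E\colon y^2=x^3+2$ and $n=2$. Then $\Phi_2(0,Y)=(Y-54000)^3$ has a linear factor over $\mathbb{Q}$. But $x^3+2$ has no rational root, so $E(\mathbb{Q})[2]=0$ and $E$ has no $\mathbb{Q}$-rational $2$-isogeny. Here $(0,54000)$ is even a nonsingular point of the plane model. The rational point of $X_0(2)$ above it is represented by $y^2=x^3+1$ with its rational $2$-torsion point. Twisting from that curve to $E$ permutes the three order-$2$ subgroups, so none stays rational; this is exactly where your twisting step breaks.

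Excluding $j=0,1728$ does not help either. Let $E=49a1$, with $j=-3375$ and CM by the maximal order of $K=\mathbb{Q}(\sqrt{-7})$, and let $n=11=\pi\bar\pi$.
\begin{itemize}
\item Since $E/\ker\pi\cong E$, the value $Y=-3375$ is a rational (double) root of $\Phi_{11}(-3375,Y)$.
\item Complex conjugation swaps $\ker\pi$ and $\ker\bar\pi$, so neither is $G_{\mathbb{Q}}$-stable.
\item The remaining ten cyclic subgroups have quotients with CM by the order of conductor $11$. Their $j$-invariants have degree $h=10$ over $\mathbb{Q}$, so they are irrational.
\end{itemize}
Hence $E$ has no rational $11$-isogeny. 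This is one of your ``singular points'', and it is a genuine counterexample rather than a delicate case.

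What your argument does prove, and what the theorem should say, is this. A rational cyclic $n$-isogeny always gives a rational root. Conversely, a \emph{simple} rational root always gives one, since its unique $C$ is $G_{\mathbb{Q}}$-stable; this needs no restriction on $j$.

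You should also justify, rather than assert, that repeated roots are a CM phenomenon. Suppose $C_1\neq C_2$ and $\iota\colon E/C_1\cong E/C_2$. Then $\hat\phi_2\circ\iota\circ\phi_1$ is an endomorphism of degree $n^2$. If $\operatorname{End}(E)=\mathbb{Z}$, it equals $\pm[n]=\pm\hat\phi_2\circ\phi_2$. Since $\hat\phi_2$ has finite kernel, $\iota\circ\phi_1=\pm\phi_2$, so $C_1=C_2$.

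So the equivalence holds for every $E$ without CM, in particular for $E_1$ with $j=257^3$. The paper's use of the result is therefore unaffected. However, the statement needs such a hypothesis (no CM, or ``simple root''), and the paper itself gives no argument beyond the citation.
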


\begin{example}\label{isogeny}

Let $E_1 \coloneqq 38025.\mathrm{ck}1$ and $F \coloneqq 38025.\mathrm{i}1$. 
The modular polynomial
\[
\begin{aligned}
\Phi_3\bigl(j(E_1),Y\bigr)
&= Y^4 - 4890361932705138741197\,Y^3 \\
&\quad + 11662548773650842301768638\,Y^2 \\
&\quad - 2681761290825031939915708292\,Y \\
&\quad + 425341531850824919624860339201 .
\end{aligned}
\]
does not have a linear factor in $\mathbb{Q}[Y]$. Indeed, $\bmod 2$ reduction of $\Phi_3\bigl(j(E_1),Y\bigr)$ is $Y^4+Y^3+1$ which does not have roots in $\Bbb{F}_2$. Hence $E_1$ does not admit a $3$-isogeny defined over $\mathbb{Q}$. For any square-free integer $D$, $E_1^D$ also does not admit a $3$-isogeny defined over $\mathbb{Q}$. By Example~\ref{congruent}, we have $E_1^D[3] = F^D[3]$.

\end{example}

\section{Visibility}

\subsection{Theory of visibility}\label{phidef}

We recall the theory of visibility. Let $E$ and $F$ be elliptic curves that are abelian subvarieties of an abelian variety $J$ over $\Bbb{Q}$. There are two approaches to defining the visibility map: the original construction in \cite{visibility}, and the map that is essentially used later in \cite{2013} and \cite{Fisher}. Remark \ref{commutative}, which shows that these are in fact the same map, is important.

\begin{definition} 

We define 

\[\text{Vis}_J(H^1(\Bbb{Q},E))\coloneqq \mathrm{Ker}(H^1(\Bbb{Q},E)\to H^1(\Bbb{Q},J))\]

\[\text{Vis}_J(\Sha(E/\Bbb{Q}))\coloneqq \Sha(E/\Bbb{Q}) \cap \text{Vis}_J(H^1(\Bbb{Q},E))\]

\end{definition}

Let $C\coloneqq J/E$ be the quotient abelian variety. The short exact sequence \(0\to E\to J\to C\to 0\) induces
\[
0 \longrightarrow J(\Bbb Q)/E(\Bbb Q) \longrightarrow C(\Bbb Q) \longrightarrow \operatorname{Vis}_J \ \!\bigl(H^1(\Bbb Q,E)\bigr)\to 0.
\]

Suppose that \(i_F(F[n])=i_E(E[n])\) inside \(J\) as \(G_{\Bbb Q}\)-submodules (where \(i_E:E\hookrightarrow J\) and \(i_F:F\hookrightarrow J\) are the inclusions), and let \(\theta:F[n]\xrightarrow{\sim}E[n]\) be the induced isomorphism.
Then we have a commutative diagram below. Note that \((a)\) is commutative since \(i_E|_{E[n]}\circ \theta=i_F|_{F[n]}\).

\[
\begin{tikzcd}[column sep=small, row sep=large]
0 \arrow[r] & F[n] \arrow[r,hook] \arrow[d,hookrightarrow] \arrow[dr, phantom, "(a)"] & F \arrow[r,"{[n]}"] \arrow[d,hookrightarrow] &
F \arrow[r] \arrow[d] & 0 \\
0 \arrow[r] & E \arrow[r,hook,"i_E"] & J \arrow[r] & C \arrow[r] & 0
\end{tikzcd}
.\]
Hence we have the following commutative diagram with exact rows under the assumption that \(F(\Bbb{Q})[n]\) is trivial:
\[
\begin{tikzcd}[column sep=small, row sep=large]
0 \arrow[r] & F(\Bbb{Q})\arrow[r,"{f\coloneqq[n]}"] \arrow[d] & F(\Bbb{Q}) \arrow[r] \arrow[d] &
F(\Bbb{Q})/nF(\Bbb{Q}) \arrow[r] \arrow[d,"\varphi"] & 0 \\  
0 \arrow[r] & J(\Bbb{Q})/E(\Bbb{Q}) \arrow[r,hook,"g"] & C(\Bbb{Q}) \arrow[r] & \operatorname{Vis}_J\ \!\bigl(H^1(\Bbb{Q},E)\bigr) \arrow[r] & 0
\end{tikzcd}.
\]

Here, $\varphi$ denotes the unique map induced by $\mathrm{Coker}f$ to $\mathrm{Coker}g$. For this \(\varphi\), Theorem~3.1 of \cite{visibility} establishes the following.

\begin{theorem}[Lemma 3.7 of \cite{visibility} for elliptic curves]\label{H^1}

Let $E$ and $F$ be elliptic curves that are abelian subvarieties of an abelian variety $J$ over $\Bbb{Q}$.
Assume there exists $(E,F,J)$ such that
\begin{enumerate}
\item $E\cap F$ is finite. 
\item  $n$ is an odd integer,
and that
\[
\gcd\!\Bigl(n,\;  \#(J/F)(\Bbb{Q})_{\mathrm{tor}}\cdot \#F(\Bbb{Q})_{\mathrm{tor}}\Bigr)=1,
\]
.
\item $F[n]=E[n]$ as subgroup schemes of $J$. 

If $E/\Bbb{Q}$ has rank $r$, then there is a natural homomorphism
\[
\varphi:\; F(\Bbb{Q})/nF(\Bbb{Q})\to \mathrm{Vis}_J\ \!\bigl(H^1(\Bbb{Q},E)\bigr)
\] such that $\#\mathrm{Ker}\varphi \le n^r$.
\end{enumerate}

\end{theorem}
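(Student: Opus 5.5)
The plan is to construct $\varphi$ by diagram chasing and then bound its kernel using the Mordell--Weil theorem for $E$. Throughout, $F(\mathbb{Q})[n]=0$ (from hypothesis (2)) and $\theta$ is the isomorphism $F[n]\to E[n]$ coming from hypothesis (3).

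\textbf{Step 1: construct $\varphi$.} The diagram of Section~\ref{phidef} gives a map of short exact sequences of $G_{\mathbb{Q}}$-modules. Its rows are
\[
0\to F[n]\to F\xrightarrow{[n]}F\to 0 \qquad\text{and}\qquad 0\to E\to J\to C\to 0 .
\]
The right vertical map $F\to C$ is well defined: $i_F(F[n])=i_E(E[n])\subset E$, so $F\to J\to C$ kills $F[n]$ and factors through $[n]\colon F\to F$.

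Taking $G_{\mathbb{Q}}$-cohomology and using naturality of connecting homomorphisms gives a commutative square. Its top arrow is $F(\mathbb{Q})/nF(\mathbb{Q})\hookrightarrow H^1(\mathbb{Q},F[n])$ and its bottom arrow is $C(\mathbb{Q})/\mathrm{im}\,J(\mathbb{Q})\hookrightarrow H^1(\mathbb{Q},E)$. The image of the bottom arrow is exactly $\mathrm{Vis}_J(H^1(\mathbb{Q},E))$. We define $\varphi$ as the induced map on cokernels, equivalently the composite
\[
F(\mathbb{Q})/nF(\mathbb{Q})\to H^1(\mathbb{Q},F[n])\xrightarrow{\theta_*}H^1(\mathbb{Q},E[n])\to H^1(\mathbb{Q},E).
\]
This construction uses no hypothesis beyond (3).

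\textbf{Step 2: identify the kernel.} Write $\psi\colon F(\mathbb{Q})\to C(\mathbb{Q})$ for the induced map. Then $x\in\ker\varphi$ exactly when $\psi(x)$ lies in the image of $J(\mathbb{Q})$. Hence $\ker\varphi\cong \psi^{-1}(\mathrm{im}\,J(\mathbb{Q}))/nF(\mathbb{Q})$. We control this through the finite isogeny $\pi\colon E\times F\to J$, $(P,Q)\mapsto P+Q$, whose kernel is $E\cap F$ (finite by (1)). Composing $\pi$ with $J\to C$ gives $E\times F\to C$, which kills $E$ and restricts on $F$ to the isogeny $F\to C=J/E$ with kernel $E\cap F$.

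Now let $x\in\ker\varphi$. Choose $y\in F(\mathbb{Q})$ and $z\in J(\mathbb{Q})$ with $ny=x$ (modulo the identifications) and $\psi(y)\equiv z$ in $C(\mathbb{Q})$.

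\textbf{Step 3: bound the kernel (expected main obstacle).} We must show that, up to elements prime to $n$, $z$ comes from $E(\mathbb{Q})+F(\mathbb{Q})$. Then $\ker\varphi$ would be a quotient of $E(\mathbb{Q})/nE(\mathbb{Q})$, or more precisely of its image in $F(\mathbb{Q})/nF(\mathbb{Q})$.

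Since $n$ is odd and prime to $\#(J/F)(\mathbb{Q})_{\mathrm{tor}}$ and to $\#F(\mathbb{Q})_{\mathrm{tor}}$, I would argue on $n$-primary parts as follows. Consider the exact sequence $0\to F\to J\to J/F\to0$, and the sequence
\[
F(\mathbb{Q})\to J(\mathbb{Q})\to (J/F)(\mathbb{Q})\to H^1(\mathbb{Q},F).
\]
The image of $E(\mathbb{Q})$ in $(J/F)(\mathbb{Q})$ has finite index up to torsion, since $E\to J/F$ is an isogeny. The torsion hypothesis then lets one replace $z$ by an element of $E(\mathbb{Q})+F(\mathbb{Q})$ after multiplying by an integer $m$ prime to $n$. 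This gives a map $\ker\varphi\to E(\mathbb{Q})/nE(\mathbb{Q})$, and we check that it is injective using $F(\mathbb{Q})[n]=0$ and $(J/F)(\mathbb{Q})[n]=0$.

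Once injectivity holds, $\#\ker\varphi\le \#E(\mathbb{Q})/nE(\mathbb{Q})$. Since $E(\mathbb{Q})\cong \mathbb{Z}^r\oplus T$ and the hypotheses force $\gcd(n,\#E(\mathbb{Q})_{\mathrm{tor}})=1$, we get $\#E(\mathbb{Q})/nE(\mathbb{Q})=n^r$. (The torsion condition on $E$ follows because $E(\mathbb{Q})[n]\cong F(\mathbb{Q})[n]$ via $\theta$.) This gives $\#\ker\varphi\le n^r$.

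The delicate point is the bookkeeping of the index of $\pi(E(\mathbb{Q})\times F(\mathbb{Q}))$ in $J(\mathbb{Q})$. This is exactly where the torsion hypothesis on $J/F$ is needed, and I would follow the argument of Lemma 3.7 in \cite{visibility} closely there.
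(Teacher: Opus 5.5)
\textbf{There is a genuine gap in Step 3.} Your Step 1 is fine; it is exactly the identification the paper makes in Remark~\ref{commutative}.

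The problem is your key claim that the torsion hypothesis lets you find $m$ with $\gcd(m,n)=1$ and $mz\in E(\mathbb{Q})+F(\mathbb{Q})$. This is false in general, and if it held it would prove too much.

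\begin{itemize}
\item \emph{Why the claim fails.} When $E\to J/F$ is an isogeny, the cokernel of $E(\mathbb{Q})\to (J/F)(\mathbb{Q})$ embeds in $H^1(\mathbb{Q},E\cap F)$, and $E\cap F\supseteq E[n]$. So this cokernel has an $n$-primary part, and the hypothesis on $(J/F)(\mathbb{Q})_{\mathrm{tor}}$ says nothing about it. For the $J=(E\times F)/\Delta$ of Definition~\ref{J}, one has $J/F\cong E$ and $E\to J/F$ becomes $[n]$. So the index is $\#E(\mathbb{Q})/nE(\mathbb{Q})=n^r$.
\item \emph{Why it proves too much (explicit $J$).} In this case $nJ(\mathbb{Q})\subset E(\mathbb{Q})+F(\mathbb{Q})$ and $\ker\varphi\cong J(\mathbb{Q})/(E(\mathbb{Q})+F(\mathbb{Q}))$, an $n$-torsion group. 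Your claim would therefore force $\ker\varphi=0$.
\item \emph{Why it proves too much (general $J$).} From $mz=e+f$ you get $\psi(mx)=m\pi_E(z)=\pi_E(f)=\psi(nf)$. So $mx-nf$ lies in $F(\mathbb{Q})_{\mathrm{tor}}\subset nF(\mathbb{Q})$, hence $x\in nF(\mathbb{Q})$, and $\varphi$ would always be injective. That fails as soon as $\theta$ carries a nonzero Kummer class from $F(\mathbb{Q})$ to the Kummer class of a point of $E(\mathbb{Q})$. For example, $\varphi=0$ if both curves have rank $1$ and their Kummer images agree under $\theta$.
\item \emph{Smaller issues.} Step~2 is garbled: a $y\in F(\mathbb{Q})$ with $ny=x$ would make $x$ trivial. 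Also, your isogeny claims for $E\times F\to J$ and $F\to J/E$ need $\dim J=2$, which the statement does not assume.
\end{itemize}

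\textbf{Repair: multiply by $n$, not by something prime to $n$.} If $\pi_E(z)=\psi(x)$ with $z\in J(\mathbb{Q})$, then $\pi_E(nz)=\psi(nx)=\pi_E(x)$, so $a:=nz-x\in E(\mathbb{Q})$.

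\begin{itemize}
\item \emph{Well defined.} The class of $a$ in $E(\mathbb{Q})/nE(\mathbb{Q})$ does not depend on choices. Changing $z$ by $e\in E(\mathbb{Q})$ changes $a$ by $ne$. Replacing $x$ by $x+ny$ lets you replace $z$ by $z+y$, which leaves $a$ unchanged.
\item \emph{Injective.} If $a=ne$, replace $z$ by $z-e$ to get $nz=x\in F$. Then the image of $z$ in $(J/F)(\mathbb{Q})$ is $n$-torsion, hence $0$ by (2). So $z\in F(\mathbb{Q})$ and $x\in nF(\mathbb{Q})$.
\end{itemize}

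This is where the $(J/F)(\mathbb{Q})_{\mathrm{tor}}$ hypothesis actually enters, and no isogeny $E\times F\to J$ is needed.

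Even quicker, and this is how the paper argues in Remark~\ref{commutative}: you already wrote $\varphi$ as $F(\mathbb{Q})/nF(\mathbb{Q})\hookrightarrow H^1(\mathbb{Q},F[n])\xrightarrow{\theta}H^1(\mathbb{Q},E[n])\xrightarrow{h}H^1(\mathbb{Q},E)$. Here $\ker h$ is the Kummer image of $E(\mathbb{Q})/nE(\mathbb{Q})$. Hence $\#\ker\varphi\le\#E(\mathbb{Q})/nE(\mathbb{Q})=n^r$, using $E(\mathbb{Q})[n]=F(\mathbb{Q})[n]=0$, with no index bookkeeping at all.
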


\begin{remark}\label{commutative}
We used the ambient $J$ to define the map $\varphi:\; F(\Bbb{Q})/nF(\Bbb{Q})\to \mathrm{Vis}_J \bigl(H^1(\Bbb{Q},E)\bigr)$, but there is a slightly different guise to define $\varphi$ without $J$. We claim that the map $\varphi$ in Theorem \ref{H^1} is the composition of the Kummer map with the non-canonical isomorphism $\theta: H^1(\mathbb{Q}, F[n])\cong H^1(\mathbb{Q},E[n])$, followed by the surjection $H^1(\mathbb{Q},E[n])\twoheadrightarrow H^1(\mathbb{Q},E)[n]$. Consider the following commutative diagram with exact rows.

\[
\begin{tikzcd}
0 \ar[r] & F[n] \ar[r] \ar[d,"\theta: \cong"'] & F \ar[r,"{[n]}"] \ar[d,equal] & F \ar[d,equal]\ar[r]& 0 \\
0 \ar[r] & E[n] \ar[r,"i\circ {\theta}^{-1}"] \ar[d] & F \ar[r] \ar[d] & F \ar[r] \ar[d] & 0 \\
0 \ar[r] & E \ar[r] & J \ar[r] & C \ar[r] & 0.
\end{tikzcd}
\]

By taking the Galois cohomology of the sequences above, we obtain the following commutative diagram with exact rows:
\[
{\footnotesize \begin{tikzcd}[column sep=large, row sep=large]
  0 \ar[r] &  
  F(\mathbb{Q})[n] \ar[r] \ar[d, "\wr"'] &  
  F(\mathbb{Q}) \ar[r, "f={[n]}"] \ar[d,equal] &  
  F(\mathbb{Q}) \ar[r] \ar[d,equal] &  
  H^{1}(\mathbb{Q},F[n]) \ar[d, equal]  
  \\
  0 \ar[r] &  
  E(\mathbb{Q})[n] \ar[r] \ar[d] &  
  F(\mathbb{Q}) \ar[r, "f={[n]}"] \ar[d] &  
  F(\mathbb{Q}) \ar[r] \ar[d] &  
  H^{1}(\mathbb{Q},E[n]) \ar[d]  
  \\
  0 \ar[r] &  
  E(\mathbb{Q}) \ar[r] &  
  J(\mathbb{Q}) \ar[r, "g"] &  
  C(\mathbb{Q}) \ar[r] &  
  H^{1}(\mathbb{Q},E).
\end{tikzcd}}
\]

From the diagram above, we derive the following commutative diagram:

\[
\begin{tikzcd}[column sep={1.8em,3.4em}, row sep=1.5em]
\operatorname{coker} f \ar[r,"\sim"] &
\dfrac{F(\mathbb{Q})}{nF(\mathbb{Q})}
  \ar[r,"\kappa"]
  \ar[dd,"\varphi"']
  \ar[ddr] &
H^{1}(\mathbb{Q},F[n]) \ar[d,"\theta"] \\
{} & {} & H^{1}(\mathbb{Q},E[n]) \ar[d, two heads, "h"] \\
(\operatorname{coker} g)[n] \ar[r,"\sim"] &
(\mathrm{Vis}_{J} H^{1}(\mathbb{Q},E))[n] \ar[r,hook] &
H^{1}(\mathbb{Q},E)[n].
\end{tikzcd}
\]

In particular, for all $x \in F(\mathbb{Q})/nF(\mathbb{Q})$, the relation $(h \circ \theta \circ \kappa)(x) = \varphi(x)$ holds. 

Viewing $\varphi$ in this way, since $\Ker(\varphi)
=\kappa^{-1}\!\bigl(\theta^{-1}(\Ker(h))\bigr)$, it is easy to see that $\#\mathrm{Ker}\varphi \le \#(E(\mathbb{Q})/nE(\mathbb{Q})) \le \#E(\mathbb{Q})[n] \cdot n^{\mathrm{rank}(E/\Bbb{Q})}$. For the more treatment of the composition $h\circ \theta \circ \kappa$, see \cite{2013} and \cite{Fisher}.\end{remark}

\begin{theorem}[Theorem 3.1 of \cite{visibility}, visibility Theorem of Agashe--Stein]\label{sha}
Let \(N_J\) be an integer divisible by the residue characteristics of all primes of bad reduction for \(J\).
In addition to conditions \((1)\)–\((3)\) of Theorem \ref{H^1}, assume that
\[
\gcd \Bigl(
  n,\;
  N_J\cdot \prod_{p}\!\bigl(c_{E/\Bbb{Q}_p}\ \cdot c_{F/\Bbb{Q}_p}\bigr)\Bigr)=1.
\]
If \(E/\mathbb{Q}\) has rank \(r\), then there is a natural homomorphism
\[
\varphi:\; F(\mathbb{Q})/nF(\mathbb{Q}) \to \mathrm{Vis}_J\! \ \bigl(\Sha(E/\Bbb{Q})\bigr)
\]such that $\#\mathrm{Ker}\varphi \le n^r$.
\end{theorem}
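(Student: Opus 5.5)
We need to prove Theorem 3.1 of Agashe–Stein (the visibility theorem stated just above). By Theorem \ref{H^1} there is already a map $\varphi\colon F(\mathbb{Q})/nF(\mathbb{Q})\to \mathrm{Vis}_J(H^1(\mathbb{Q},E))$ with $\#\mathrm{Ker}\varphi\le n^r$. So the plan is to show that the image of $\varphi$ lies in $\Sha(E/\mathbb{Q})$. Concretely, for each $x\in F(\mathbb{Q})$ and each place $v$ of $\mathbb{Q}$ we must show $\res_v(\varphi(x))=0$ in $H^1(\mathbb{Q}_v,E)$.

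We use the description of Remark \ref{commutative}: $\varphi = h\circ\theta\circ\kappa$. Equivalently, $\varphi(x)$ is the image of $x$ under the connecting map $C(\mathbb{Q})\to H^1(\mathbb{Q},E)$, where $x$ is viewed in $C(\mathbb{Q})$ via $F\to J\to C$. Its restriction at $v$ is therefore the coboundary of $x\in C(\mathbb{Q}_v)$. It vanishes exactly when $x$ lifts to $J(\mathbb{Q}_v)$, i.e.\ when $x\in\mathrm{im}(J(\mathbb{Q}_v)\to C(\mathbb{Q}_v))$. Since $x$ is also the image of $nx'$-type points, it suffices to show that the class is both $n$-torsion and lies in a group of order prime to $n$. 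We treat the places in three cases.

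\begin{enumerate}
\item \emph{$v=\infty$.} $H^1(\mathbb{R},E)$ is killed by $2$, and $\varphi(x)$ is killed by $n$ with $n$ odd. Hence $\res_\infty\varphi(x)=0$.

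\item \emph{$v=p$ with $p\nmid nN_J$ (good reduction for $J$, hence for $E$, $F$, $C$).} Let $\mathcal{E},\mathcal{J},\mathcal{C}$ be the Néron models. Since $p\nmid n$ and $J$ has good reduction, $0\to\mathcal{E}\to\mathcal{J}\to\mathcal{C}\to0$ is exact over $\mathbb{Z}_p$; this is the standard smooth-kernel fact. By Lang's theorem and Hensel's lemma, $H^1(\mathbb{Z}_p,\mathcal{E})=H^1(\mathbb{F}_p,\tilde{\mathcal{E}})=0$ because $\tilde{\mathcal E}$ is connected. So $\mathcal{J}(\mathbb{Z}_p)\to\mathcal{C}(\mathbb{Z}_p)$ is surjective, and since $\mathcal{C}(\mathbb{Z}_p)=C(\mathbb{Q}_p)$, the local class vanishes. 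The reason the paper's hypothesis includes all $p\mid N_J$ in the gcd is the next case.

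\item \emph{$v=p$ with $p\mid N_J$, where $p\nmid n$ by the gcd hypothesis.} We only need the $n$-primary part of $H^1(\mathbb{Q}_p,E)$ in the relevant subgroup to vanish. The class $\res_p\varphi(x)$ lies in the image of $\delta\colon C(\mathbb{Q}_p)\to H^1(\mathbb{Q}_p,E)$, and it comes from $F(\mathbb{Q}_p)/nF(\mathbb{Q}_p)$. Write $F(\mathbb{Q}_p)\supset F_0(\mathbb{Q}_p)\supset F_1(\mathbb{Q}_p)$. Because $p\nmid n$, $F_1(\mathbb{Q}_p)$ is pro-$p$ and hence $n$-divisible. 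The quotient $\tilde F_{ns}(\mathbb{F}_p)$ then has order governed by $\#F(\mathbb{Q}_p)[n]$, and $[F(\mathbb{Q}_p):F_0(\mathbb{Q}_p)]=c_{F/\mathbb{Q}_p}$ is prime to $n$. Thus $F(\mathbb{Q}_p)/nF(\mathbb{Q}_p)\cong \tilde F_{ns}(\mathbb{F}_p)/n$. The image of such classes is unramified: it lies in $H^1(\mathbb{Q}_p^{ur}/\mathbb{Q}_p, E(\mathbb{Q}_p^{ur}))\cong H^1(\mathbb{F}_p,\Phi_E)$, where again Lang gives vanishing on the connected part. Now $\#H^1(\mathbb{F}_p,\Phi_E)=\#\Phi_E(\mathbb{F}_p)=c_{E/\mathbb{Q}_p}$, which is prime to $n$, while the class is killed by $n$. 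Hence it is zero.

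The unramified claim needs an argument. Over $\mathbb{Q}_p^{ur}$, the point $x$ lies in $nF(\mathbb{Q}_p^{ur})$ up to components, since $\tilde F_{ns}(\bar{\mathbb{F}}_p)$ is $n$-divisible and $c_F$ is prime to $n$. Therefore $\kappa(x)$ restricted to the inertia group is trivial.
\end{enumerate}

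The main obstacle is case 3: making the unramifiedness at bad primes precise, and checking that the identification of $\varphi$ with $h\circ\theta\circ\kappa$ is compatible with restriction. We will do this inertia-level computation first, using the factorization through $H^1(\mathbb{Q}_p,F[n])\cong H^1(\mathbb{Q}_p,E[n])$ and the fact that the unramified subgroups correspond under $\theta$. The bound $\#\mathrm{Ker}\varphi\le n^r$ is inherited from Theorem \ref{H^1}. The hypothesis $\gcd(n,\#F(\mathbb{Q})_{\mathrm{tor}})=1$ gives $F(\mathbb{Q})[n]=0$, which the diagram defining $\varphi$ requires.
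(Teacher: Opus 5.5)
There is a genuine gap: you never treat the places $p\mid n$. Your cases are $v=\infty$, $p\nmid nN_J$, and $p\mid N_J$ (so $p\nmid n$). Because $\gcd(n,N_J)=1$, every prime dividing $n$ is a prime of good reduction for $J$. Such a prime is nevertheless excluded from your Case~2, whose exactness of $0\to\mathcal{E}\to\mathcal{J}\to\mathcal{C}\to 0$ you justify only through $p\nmid n$. Nor can the mechanism of your Case~3 be carried over to these primes. For $p\mid n$:
\begin{itemize}
\item the group $F_1(\mathbb{Q}_p)$ is not $n$-divisible, even over $\mathbb{Q}_p^{ur}$;
\item the local Kummer image of $F(\mathbb{Q}_p)/nF(\mathbb{Q}_p)$ is not contained in the unramified subgroup, since it is larger by a factor $p^{v_p(n)}$;
\item $H^1(\mathbb{Q}_p,E)[n]$ is dual to $E(\mathbb{Q}_p)/nE(\mathbb{Q}_p)$, which has $\mathbb{Z}_p/n\mathbb{Z}_p$ as a direct summand, so its order is divisible by $p$.
\end{itemize}
So the principle ``an $n$-torsion class in a group of order prime to $n$ vanishes'' is unavailable there. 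This is not a formality. It is exactly the case the paper invokes separately (``Case 3 of the proof of Theorem 3.1'' of Agashe--Stein, in the proof of Proposition~\ref{parityeven}). The whole point of the paper is that when $\ell\mid n$ is a prime of \emph{bad} reduction, $\res_\ell\circ\varphi$ can be nonzero.

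The missing ingredient is the theory of finite flat group schemes. Since $n$ is odd, each $p\mid n$ satisfies $e(\mathbb{Q}_p/\mathbb{Q}_p)=1<p-1$. By Raynaud's theorem, the identification $F[n]=E[n]$ of generic fibres then extends to the finite flat models over $\mathbb{Z}_p$. Together with good reduction of $J$ at $p$, this makes $0\to\mathcal{E}\to\mathcal{J}\to\mathcal{C}\to 0$ an exact sequence of abelian schemes over $\mathbb{Z}_p$. Lang's theorem and Hensel's lemma then give $H^1(\mathbb{Z}_p,\mathcal{E})=0$, so $J(\mathbb{Q}_p)\to C(\mathbb{Q}_p)$ is onto and $\res_p\varphi(x)=0$.

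Equivalently, in the language of Remark~\ref{commutative}: $\theta$ carries $\mathrm{Im}\,\kappa_{F,p}=H^1_{\mathrm{fl}}(\mathbb{Z}_p,\mathcal{F}[n])$ onto $\mathrm{Im}\,\kappa_{E,p}=H^1_{\mathrm{fl}}(\mathbb{Z}_p,\mathcal{E}[n])$, which is the kernel of $H^1(\mathbb{Q}_p,E[n])\to H^1(\mathbb{Q}_p,E)$. This is where the oddness of $n$ is used essentially, not only at $\infty$. It is also where $\gcd(n,N_J)=1$ really earns its place: it guarantees good reduction at the primes dividing $n$.

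Your treatment of $v=\infty$ and of the bad primes $p\nmid n$ is sound and matches the cited argument. There you use unramified classes, reduce to the identity component via $c_{F/\mathbb{Q}_p}$, and use $\#H^1(\mathbb{F}_p,\Phi_E)=c_{E/\mathbb{Q}_p}$.
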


\maketitle

If one could produce a difference of ranks at least \(1\), then this visibility theorem would yield a nontrivial element in \(\Sha(E/\mathbb{Q})[\ell]\). In fact, however, the following result shows that the difference must be at least \(2\). This implies that, within the framework of the existing visibility theory, one must obtain an even difference of ranks of at least $2$.

The following are the formal consequence from Mazur--Rubin's theory of local constants, together with the results of Agashe--Stein and Fisher establishing the triviality of $\mathrm{res}_{\ell}\circ \phi$. For an alternative proof in case (1), see Remark~1.10 of \cite{flat}.

\begin{proposition}\label{parityeven}
Let $\ell$ be an odd prime number, and let $E/\mathbb{Q}$ and $F/\mathbb{Q}$ be elliptic curves such that
\[
E[\ell]\simeq F[\ell]
\]
as $G_{\mathbb{Q}}$-modules.
Assume that $\ell\nmid c_{E/\mathbb{Q}_p}$ and $\ell\nmid c_{F/\mathbb{Q}_p}$ for every prime $p$, and that one of the following conditions holds:
\begin{enumerate}
    \item both $E$ and $F$ have good reduction at $\ell$;
    \item both $E$ and $F$ have split multiplicative reduction at $\ell$;
    \item one of $E$ and $F$ has non-split multiplicative reduction at $\ell$, and the other has good reduction at $\ell$.
\end{enumerate}

If moreover $\Sha(E/\mathbb{Q})[\ell^\infty]$ and $\Sha(F/\mathbb{Q})[\ell^\infty]$ are finite, then
\[
\operatorname{rank}(E/\mathbb{Q})\equiv \operatorname{rank}(F/\mathbb{Q}) \bmod{2}.
\]
\end{proposition}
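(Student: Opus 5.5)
We use the Mazur--Rubin theory of local constants. Since $E[\ell]\simeq F[\ell]$, the Selmer groups $\mathrm{Sel}^{\ell}(E/\mathbb{Q})$ and $\mathrm{Sel}^{\ell}(F/\mathbb{Q})$ sit inside the same group $H^1(\mathbb{Q},E[\ell])$. They are cut out by local conditions $H^1_f(\mathbb{Q}_p,E[\ell])$ and $H^1_f(\mathbb{Q}_p,F[\ell])$, which are maximal isotropic for the Weil pairing; the isomorphism preserves this pairing up to a scalar, and since the scalar is a square up to sign and $\ell$ is odd, this does not affect isotropy. The parity comparison theorem of Mazur--Rubin (the Poitou--Tate / quadratic-form argument) then gives
\[
\dim_{\mathbb{F}_\ell}\mathrm{Sel}^{\ell}(E/\mathbb{Q})-\dim_{\mathbb{F}_\ell}\mathrm{Sel}^{\ell}(F/\mathbb{Q})\equiv \sum_{p}\dim_{\mathbb{F}_\ell}\frac{H^1_f(\mathbb{Q}_p,E[\ell])}{H^1_f(\mathbb{Q}_p,E[\ell])\cap H^1_f(\mathbb{Q}_p,F[\ell])}\pmod 2 .
\]
So it suffices to show that the two local conditions coincide at every place $p$. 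The archimedean place is trivial because $\ell$ is odd.

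The plan proceeds place by place.

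\textbf{Primes $p\neq\ell$.} Here $H^1_f(\mathbb{Q}_p,E[\ell])$ is the image of $E(\mathbb{Q}_p)/\ell$. Since $\ell\nmid c_{E/\mathbb{Q}_p}$, this equals the unramified classes $H^1_{\mathrm{ur}}(\mathbb{Q}_p,E[\ell])$. The same holds for $F$. The unramified subgroup depends only on the Galois module, so the two conditions agree.

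\textbf{The prime $p=\ell$.} Here we use the vanishing $\mathrm{res}_\ell\circ\varphi=0$ established by Agashe--Stein in the good case and by Fisher in cases (2) and (3). Equivalently, the Kummer image of $F(\mathbb{Q}_\ell)/\ell$ lies in that of $E(\mathbb{Q}_\ell)/\ell$ under $\theta$; this is the local input behind Theorem~\ref{sha} and Fisher's extension, applied over $\mathbb{Q}_\ell$ with the points of $F$ taken locally. The two images have the same $\mathbb{F}_\ell$-dimension, namely $\dim E(\mathbb{Q}_\ell)[\ell]+1$. We have $E(\mathbb{Q}_\ell)[\ell]\cong F(\mathbb{Q}_\ell)[\ell]$ as $H^0$ of isomorphic modules, and the $+1$ comes from $[\mathbb{Q}_\ell:\mathbb{Q}_\ell]=1$ via Tate's local Euler characteristic. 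Containment plus equal dimension gives equality.

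With all local conditions equal, the Selmer dimensions agree modulo $2$. Finally, write $\dim\mathrm{Sel}^{\ell}(E/\mathbb{Q})=\operatorname{rank}(E/\mathbb{Q})+\dim E(\mathbb{Q})[\ell]+\dim\Sha(E/\mathbb{Q})[\ell]$. Finiteness of $\Sha(E/\mathbb{Q})[\ell^\infty]$ plus the Cassels--Tate pairing, which is alternating, makes $\dim\Sha(E/\mathbb{Q})[\ell]$ even. Moreover $E(\mathbb{Q})[\ell]\cong F(\mathbb{Q})[\ell]$. The same decomposition holds for $F$, so the Selmer parity statement becomes the claimed congruence of ranks.

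The main obstacle is the local comparison at $\ell$ in cases (2) and (3). There I would directly invoke Fisher's computation, which uses the Tate parametrization and the Tamagawa hypothesis $\ell\nmid c$, that $\theta$ carries the Kummer image of $F$ into that of $E$, rather than reprove it.
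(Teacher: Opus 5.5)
Your proposal is correct and follows essentially the same route as the paper. Both arguments apply the Mazur--Rubin comparison of self-dual Selmer structures on $W=E[\ell]=F[\ell]$. Both identify the local conditions at $p\neq\ell$ with the unramified classes using $\ell\nmid c_p$ (Schaefer--Stoll), and both obtain the local vanishing $\varphi_\ell=0$ at $p=\ell$ from Agashe--Stein (Case 3) and Fisher (Theorems 4.2 and 4.4), where containment plus equal dimension gives $\delta_\ell=0$. Your explicit passage from Selmer parity to rank parity via the alternating Cassels--Tate pairing only spells out what the paper folds into the finiteness hypothesis. The remark that ``the scalar is a square up to sign'' is unnecessary, and it is false in general: for $\ell\equiv 1\pmod 4$, $\pm$squares are just the squares. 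Isotropy is preserved under any nonzero scalar multiple of the pairing, which is exactly what the paper uses.
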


\begin{proof}

In the notation of Theorem 1.4 of \cite{annals}, we take $W=E[\ell]=F[\ell]$. We define the Selmer structure \(\mathcal E\) by $H^1_{\mathcal E}(\mathbb Q_p,W)
:=
\operatorname{Im}\!\left(
\frac{E(\mathbb Q_p)}{\ell E(\mathbb Q_p)}
\longrightarrow
H^1(\mathbb Q_p,E[\ell])
\right)$. Then, under the assumption that $\Sha(E/\mathbb{Q})[\ell^\infty]$ and $\Sha(F/\mathbb{Q})[\ell^\infty]$ are finite,
we have 
\[
\mathrm{rank}(E/\Bbb{Q})-\mathrm{rank}(F/\Bbb{Q})\equiv
\sum_{p}\delta_p
\bmod{2},
\]
where $\delta_{p}
:=
\dim_{\Bbb{F}_\ell}
\frac{\operatorname{Im}(\kappa_{E,p})}
{\operatorname{Im}(\kappa_{E,p})\cap \operatorname{Im}(\kappa_{F,p})}$. Here \(\kappa_{E,p}\colon E(\Q_{p})/\ell E(\Q_{p})\to H^1(\Q_{p},E[\ell])\) denotes the local Kummer maps, which is a self dual Selmer structure on $E[\ell]$. Note that there is no need to assume compatibility of pairings with the identification \(E[\ell]=F[\ell]\), since \(\dim_{\mathbb{F}_\ell}\!\bigl(\bigwedge^2 E[\ell]\bigr)=1\). Hence orthogonal spaces, and therefore the self-duality of the Selmer structure, do not change when the pairing is replaced by a nonzero scalar multiple under the identification \(E[\ell]=F[\ell]\).

If $p\neq \ell$, then by Lemma 3.1 and Proposition 3.2 of \cite{descent}, the assumptions $\ell\nmid c_{E/\mathbb{Q}_p}$
 \text{and}
 $\ell\nmid c_{F/\mathbb{Q}_p}$
imply that $\mathrm{Im}\kappa_{E,p}=H^1_{\mathrm{nr}}(\mathbb{Q}_p,E[\ell])$
 and
$\mathrm{Im}\kappa_{F,p}=H^1_{\mathrm{nr}}(\mathbb{Q}_p,F[\ell])$. Therefore, $\delta_p=0$ for all $p\neq \ell$.

It remains to consider the prime $\ell$. If both $E$ and $F$ have good reduction at $\ell$, then $\delta_\ell=0$ by \cite{visibility}. It is enought to prove that $\mathrm{Im}(\kappa_{E,{\ell}})\subset \mathrm{Im}(\kappa_{F,\ell})$. Consider the following commutative diagram, whose commutativity follows from Remark \ref{commutative} after replacing $\Bbb{Q}$ with $\Bbb{Q}_{\ell}$.

{\scriptsize
\[
\begin{tikzcd}[column sep={1.8em,3.4em}, row sep=1.5em]
\dfrac{E(\mathbb{Q}_{\ell})}{\ell E(\mathbb{Q}_{\ell})}
  \ar[r,"{\kappa_{E,\ell}}"]
  \ar[dd]
  \ar[ddr,"\varphi_{\ell}\coloneqq h\circ \theta \circ {\kappa_{E,\ell}}"'] &
H^{1}(\mathbb{Q}_{\ell},E[\ell]) \ar[d,"\theta: \cong"] \\
{} & H^{1}(\mathbb{Q}_{\ell},F[\ell]) \ar[d, two heads,"h"] \\
(\mathrm{Vis}_{J} H^{1}(\mathbb{Q}_{\ell},F))[\ell] \ar[r,hook] &
H^{1}(\mathbb{Q}_{\ell},F)[\ell].
\end{tikzcd}
\]}

Here, $J \stackrel{\text{def}}{=} (F \times E)/\Delta, \Delta =F[\ell]=E[\ell]$ and \[\text{Vis}_J(H^1(\Bbb{Q}_{\ell},F)) \stackrel{\text{def}}{=}  \mathrm{Ker}(H^1(\Bbb{Q}_{\ell},F)\to H^1(\Bbb{Q}_{\ell},J)).\]

Case 3 of the proof of Theorem 3.1 of \cite{visibility} proves $\varphi_{\ell}=0$.

If both $E$ and $F$ have split multiplicative reduction at $\ell$, then
$\delta_\ell=0$ by Theorem 4.2 of \cite{Fisher}. On the other hand, if one
of $E$ and $F$ has non-split multiplicative reduction at $\ell$ and the
other has good reduction at $\ell$, then $\delta_\ell=0$ by Theorem 4.4 of
\cite{Fisher}. Hence
\[
\sum_p \delta_p \equiv 0 \pmod{2},
\]
and therefore
\[
\operatorname{rank}(E/\mathbb{Q})
\equiv
\operatorname{rank}(F/\mathbb{Q})
\pmod{2}.
\]

\qedhere

\end{proof}

\subsection{The relaxed version of $\ell$-torsion subgroup of the Tate--Shafarevich group}

As we have seen in the previous section, for a prime $\ell$ of bad reduction, $\text{res}_{\ell} \circ \varphi$ need not be zero. If there exists an isogeny $\phi: E \to E'$ over $\mathbb{Q}_{\ell}$ such that its dual $\hat{\phi}$ satisfies $E(\mathbb{Q}_{\ell})/\hat{\phi} E'(\mathbb{Q}_{\ell}) = 0$, then there are cases where the visibility theorem can be applicable since $\text{res}_{\ell} \circ \varphi = 0$ holds \cite{Fisher}. In the split multiplicative case, Tate curve uniformization shows that we can choose the isogeny $\phi$ so that its dual $\hat{\phi} : E'(\mathbb{Q}_\ell) \to E(\mathbb{Q}_\ell)$ is surjective. By contrast, there is no general theory that covers all cases of additive reduction. For example, even if $E$ has additive reduction (which is not potentially multiplicative), a finite extension $K/\mathbb{Q}_\ell$ can provide good reduction. However, for $\ell=3$, the condition $e_{K/\mathbb{Q}_\ell} < \ell-1$ is never satisfied, preventing a direct application of the visibility theorem. To show that there is still a possibility of proving $\Sha(E^D/\mathbb{Q})[\ell] \neq 0$ using $\varphi$ provided $E(\mathbb{Q}_{\ell})[\ell]=0$,

In light of the ``inflated Selmer group'' introduced in the appendix to \cite{mazur}, we investigate the $\ell$-torsion subgroup of the relaxed Tate--Shafarevich group.

 \begin{definition}
Let $\ell$ be a prime and let $E/\mathbb{Q}$ be an abelian variety.
Fix a subgroup $M\subset H^1(\mathbb{Q}_\ell,E)[\ell]$.
We define
\[
\Sha(E/\mathbb{Q},M)[\ell]
:=\{\, a\in H^1(\mathbb{Q},E)[\ell]\mid
\text{res}_p(a)=0\ \text{for all }p\le \infty,\ p\neq \ell,\ \text{and }\text{res}_\ell(a)\in M \,\}.
\]
Equivalently,
\[
\Sha(E/\mathbb{Q},M)[\ell]
=
\Ker\!\left(
H^1(\mathbb{Q},E)[\ell]
\xrightarrow{\;
(\bigoplus_{p\le \infty,\; p\neq \ell}\res_p)\,\oplus\, (q\circ \res_\ell)
\;}
\left(\bigoplus_{p\le \infty,\; p\neq \ell} H^1(\mathbb{Q}_p,E)[\ell]\right)
\oplus
\left(H^1(\mathbb{Q}_\ell,E)[\ell]/M\right)
\right),
\]
where \(q\) is the quotient map
\[
q \colon H^1(\mathbb{Q}_\ell,E)[\ell]\to H^1(\mathbb{Q}_\ell,E)[\ell]/M.
\] where $q$ is the quotient map $H^1(\mathbb{Q}_\ell,E)[\ell]\to H^1(\mathbb{Q}_\ell,E)[\ell]/M$.

In particular, for $\varphi$ in Remark \ref{commutative}, we denote $\Sha(E/\mathbb{Q},\mathrm{Im}(\text{res}_{\ell}\circ \varphi))[\ell]$ as $\Sha(E/\mathbb{Q},\operatorname{Vis})[\ell]$.
\end{definition}

\begin{theorem}\label{rewrite}
Let $\ell$ be a prime number. In addition to conditions \((1)\)–\((3)\) of Theorem \ref{H^1}, assume that
\[
\gcd (
  \ell,\;
   \prod_{p}\!\bigl(c_{E/\Bbb{Q}_p}\ \cdot c_{F/\Bbb{Q}_p}\bigr)\Bigr)=1.
\]

Then, we have $\mathrm{Im}\varphi \subset \Sha(E/\mathbb{Q},\operatorname{Vis})[\ell]$.
    
\end{theorem}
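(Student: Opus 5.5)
We have to show that for every $x\in F(\mathbb{Q})/\ell F(\mathbb{Q})$ the class $\varphi(x)$ lies in $\Sha(E/\mathbb{Q},\operatorname{Vis})[\ell]$. That means four things:
\begin{itemize}
\item $\varphi(x)$ lies in $H^1(\mathbb{Q},E)[\ell]$;
\item $\res_\infty(\varphi(x))=0$;
\item $\res_p(\varphi(x))=0$ for every finite $p\neq\ell$;
\item $\res_\ell(\varphi(x))\in\mathrm{Im}(\res_\ell\circ\varphi)$.
\end{itemize}
The first and last points are formal. By Remark \ref{commutative}, $\varphi=h\circ\theta\circ\kappa$ lands in $H^1(\mathbb{Q},E)[\ell]$. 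The condition at $\ell$ holds tautologically, since $M=\mathrm{Im}(\res_\ell\circ\varphi)$ is by definition the set of such restrictions. So the content of the statement is the vanishing at all places $p\neq\ell$.

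Next I would use that restriction commutes with the Kummer map, $\theta$ and $h$, so that $\res_p\circ\varphi=h_p\circ\theta_p\circ\kappa_{F,p}\circ(\text{loc}_p)$. Here $\text{loc}_p\colon F(\mathbb{Q})/\ell F(\mathbb{Q})\to F(\mathbb{Q}_p)/\ell F(\mathbb{Q}_p)$ is localization, and $h_p\colon H^1(\mathbb{Q}_p,E[\ell])\to H^1(\mathbb{Q}_p,E)[\ell]$. Since $h_p$ kills exactly $\mathrm{Im}(\kappa_{E,p})$, it suffices to show
\[
\theta_p\bigl(\mathrm{Im}\,\kappa_{F,p}\bigr)\subseteq \mathrm{Im}\,\kappa_{E,p}\qquad\text{for all } p\neq \ell .
\]
At $p=\infty$ this is automatic: $\ell$ is odd, so $H^1(\mathbb{R},E[\ell])=0$ because $\mathrm{Gal}(\mathbb{C}/\mathbb{R})$ has order $2$, prime to $\ell$.

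For a finite prime $p\neq\ell$ I would invoke Lemma 3.1 and Proposition 3.2 of \cite{descent}, exactly as in the proof of Proposition \ref{parityeven}. The hypotheses $\ell\nmid c_{E/\mathbb{Q}_p}$ and $\ell\nmid c_{F/\mathbb{Q}_p}$ give $\mathrm{Im}\,\kappa_{E,p}=H^1_{\mathrm{nr}}(\mathbb{Q}_p,E[\ell])$ and $\mathrm{Im}\,\kappa_{F,p}=H^1_{\mathrm{nr}}(\mathbb{Q}_p,F[\ell])$. The unramified subgroup $H^1(\mathbb{Q}_p^{\mathrm{nr}}/\mathbb{Q}_p,W^{I_p})$ is functorial in the $G_{\mathbb{Q}_p}$-module $W$. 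Hence the $G_{\mathbb{Q}}$-isomorphism $\theta\colon F[\ell]\cong E[\ell]$ carries $H^1_{\mathrm{nr}}(\mathbb{Q}_p,F[\ell])$ onto $H^1_{\mathrm{nr}}(\mathbb{Q}_p,E[\ell])$, which gives the required inclusion. This handles every $p\neq\ell$ at once, whether $E$ and $F$ have good or bad reduction there.

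I expect the main obstacle to be this local step at bad primes $p\neq\ell$: making sure the cited identification of the Kummer image with the unramified classes holds for all reduction types once the Tamagawa numbers are prime to $\ell$. The argument I would give is that $E(\mathbb{Q}_p)/\ell E(\mathbb{Q}_p)\cong \Phi_E(\mathbb{F}_p)/\ell$ is trivial. Likewise $H^1(\mathbb{F}_p,\mathcal{E}(\overline{\mathbb{Z}}_p))$ has no $\ell$-torsion beyond what the component group contributes. Then both the Kummer image and $H^1_{\mathrm{nr}}$ have order $\#E(\mathbb{Q}_p)[\ell]$ (up to the $c_p$ factor), so they coincide. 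A secondary point is to check that hypotheses (1)--(3) of Theorem \ref{H^1} are used only to make $\varphi$ well defined through $J$. By Remark \ref{commutative} this agrees with $h\circ\theta\circ\kappa$, so the whole argument can be run with the $J$-free description.
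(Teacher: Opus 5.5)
Your argument is correct, but it is organized differently from the paper's. The paper's proof is a bare citation: the vanishing of $\res_p\circ\varphi$ at $p=\infty$ and at finite $p\neq\ell$ is taken from Cases~1 and~2 of the proof of Theorem~3.1 in \cite{visibility}, which are phrased in terms of the ambient variety $J$.

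You instead use Remark~\ref{commutative} to pass to the $J$-free description $\varphi=h\circ\theta\circ\kappa$. You note that the condition at $\ell$ is tautological. You then reduce every $p\neq\ell$ to the local inclusion $\theta_p(\mathrm{Im}\,\kappa_{F,p})\subseteq\mathrm{Im}\,\kappa_{E,p}$, which you get from the Schaefer--Stoll identification of Kummer images with unramified classes. This is the same input the paper uses to get $\delta_p=0$ for $p\neq\ell$ in Proposition~\ref{parityeven}.

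The local mechanism is ultimately the same as in \cite{visibility}, but your packaging has an advantage. It makes clear that only $\ell\nmid c_{E/\mathbb{Q}_p}\,c_{F/\mathbb{Q}_p}$ for $p\neq\ell$ is used, and that the reduction of $J$ plays no role. The hypothesis $\gcd(\ell,N_J)=1$ of Theorem~\ref{sha} is exactly what this statement drops. The paper's one-line citation leaves the reader to check this inside \cite{visibility}.

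One correction to the sketch in your last paragraph: the claim that $E(\mathbb{Q}_p)/\ell E(\mathbb{Q}_p)\cong\Phi_E(\mathbb{F}_p)/\ell$ is trivial is false.
\begin{itemize}
\item At a prime $p\neq\ell$ of good reduction with $\ell\mid\#\tilde{E}(\mathbb{F}_p)$, one has $c_{E/\mathbb{Q}_p}=1$ but $E(\mathbb{Q}_p)/\ell E(\mathbb{Q}_p)\cong\tilde{E}(\mathbb{F}_p)/\ell\neq 0$.
\item The claim also contradicts your next sentence, since the Kummer image has order exactly $\#E(\mathbb{Q}_p)[\ell]$, with no $c_p$ factor.
\end{itemize}
What $\ell\nmid c_{F/\mathbb{Q}_p}$ actually gives is $F(\mathbb{Q}_p)=F_0(\mathbb{Q}_p)+\ell F(\mathbb{Q}_p)$. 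Hence $\mathrm{Im}\,\kappa_{F,p}=\kappa_{F,p}(F_0(\mathbb{Q}_p))$, and these classes are unramified because multiplication by $\ell$ is \'etale on the identity component of the N\'eron model when $p\neq\ell$.

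Your second observation then finishes the argument without any counting. The map $h_p$ sends an unramified class into $H^1(\mathbb{Q}_p^{\mathrm{nr}}/\mathbb{Q}_p,E(\mathbb{Q}_p^{\mathrm{nr}}))[\ell]\cong H^1(\mathbb{F}_p,\Phi_E(\overline{\mathbb{F}}_p))[\ell]$. This is zero because the group has order $c_{E/\mathbb{Q}_p}$, which is prime to $\ell$.

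Since your main line cites \cite{descent} for the identification, this slip does not affect the validity of the proof.
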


\begin{proof}
We prove that $\mathrm{res}_{\ell}\circ \varphi=0$ for the infinite prime $\infty$ and all finite primes $\ell \neq p$. For the proofs regarding $\infty$ and $\ell \neq p$, see Case 1 and Case 2 of \cite{visibility}, respectively.

\end{proof}

The situation is as follows. 

\begin{figure}[H]
\centering
\[
\begin{tikzcd}[row sep=1.2em, column sep=3.2em, cells={font=\small}]
F^{D}(\Q)/\ell F^{D}(\Q) \arrow[dr, hook, "\varphi"'] &
\Sha(E^{D}/\Q)[\ell] \arrow[d, hook] \\
& \Sha(E^D/\mathbb{Q},\operatorname{Vis})[\ell]
\end{tikzcd}
\]
\caption{If $\#(F^D(\Bbb{Q})/\ell F^D(\Bbb{Q}))\ge \ell^2$ and $\#\Sha(E^{D}/\Q, \text{Vis})[\ell]/\#\Sha(E^{D}/\Q)[\ell] < \ell^{2}$, then $\Sha(E^{D}/\Q)[\ell]\neq 0$.}
\end{figure}

\begin{proposition}\label{V}
Let $\ell$ be a prime, let $E/\mathbb{Q}$ be an elliptic curve, and let
$M\subset H^{1}(\mathbb{Q}_{\ell},E)[\ell]$ be a subgroup. Then
\[
  \frac{\#\Sha(E/\mathbb{Q},M)[\ell]}{\#\Sha(E/\mathbb{Q})[\ell]} \le \#M.
\]
In particular, we obtain
\[
  \frac{\#\Sha(E/\mathbb{Q},M)[\ell]}{\#\Sha(E/\mathbb{Q})[\ell]}
  \le \ell\,\#E(\mathbb{Q}_{\ell})[\ell].
\]
\end{proposition}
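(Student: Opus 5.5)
The plan is to realize $\Sha(E/\mathbb{Q})[\ell]$ as a kernel inside $\Sha(E/\mathbb{Q},M)[\ell]$ and bound the cokernel by $M$. Consider the homomorphism
\[
\rho \colon \Sha(E/\mathbb{Q},M)[\ell] \longrightarrow M,\qquad a\mapsto \res_\ell(a).
\]
By the definition of $\Sha(E/\mathbb{Q},M)[\ell]$, this map is well defined, since $\res_\ell(a)\in M$. An element $a$ lies in $\Ker\rho$ exactly when $\res_p(a)=0$ for all places $p\le\infty$, i.e.\ exactly when $a\in\Sha(E/\mathbb{Q})$. Since $a$ is already $\ell$-torsion, $\Ker\rho=\Sha(E/\mathbb{Q})[\ell]$. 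Hence $\Sha(E/\mathbb{Q},M)[\ell]/\Sha(E/\mathbb{Q})[\ell]$ injects into $M$, which gives the first inequality. Finiteness of $\Sha(E/\mathbb{Q})[\ell]$ (it sits inside the finite group $\mathrm{Sel}^{\ell}(E/\mathbb{Q})$) makes the ratio meaningful, and the argument shows $\Sha(E/\mathbb{Q},M)[\ell]$ is finite as well.

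For the ``in particular'' statement, it suffices to bound $\#M\le \#H^1(\mathbb{Q}_\ell,E)[\ell]$ and to show that $\#H^1(\mathbb{Q}_\ell,E)[\ell]= \ell\,\#E(\mathbb{Q}_\ell)[\ell]$, or at least that it is at most this. I would use Tate local duality: $H^1(\mathbb{Q}_\ell,E)\cong \operatorname{Hom}(E(\mathbb{Q}_\ell),\mathbb{Q}/\mathbb{Z})$ (continuous characters), so $H^1(\mathbb{Q}_\ell,E)[\ell]\cong \operatorname{Hom}(E(\mathbb{Q}_\ell)/\ell E(\mathbb{Q}_\ell),\mathbb{Z}/\ell)$. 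This has order $\#E(\mathbb{Q}_\ell)/\ell E(\mathbb{Q}_\ell)$.

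The remaining step, which I expect to be the only real input, is the standard formula
\[
\#E(\mathbb{Q}_\ell)/\ell E(\mathbb{Q}_\ell)=\ell^{[\mathbb{Q}_\ell:\mathbb{Q}_\ell]}\cdot\#E(\mathbb{Q}_\ell)[\ell]=\ell\,\#E(\mathbb{Q}_\ell)[\ell].
\]
To prove it, I use that $E(\mathbb{Q}_\ell)$ contains a finite-index open subgroup isomorphic to $\mathbb{Z}_\ell$, namely the formal group points $E_n(\mathbb{Q}_\ell)$ for $n$ large, via the formal logarithm. For a compact group $A$ with such a subgroup $U\cong\mathbb{Z}_\ell$, the Herbrand quotient of multiplication by $\ell$ satisfies $\#(A/\ell A)/\#A[\ell]=\#(U/\ell U)/\#U[\ell]=\ell$. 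This holds because the quotient is finite, so its Herbrand quotient is $1$, and the quantity is multiplicative in short exact sequences by the snake lemma. Combining this with the first part gives $\#\Sha(E/\mathbb{Q},M)[\ell]/\#\Sha(E/\mathbb{Q})[\ell]\le \ell\,\#E(\mathbb{Q}_\ell)[\ell]$.
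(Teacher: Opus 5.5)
Your proposal is correct and follows the paper's proof. Your map $\rho=\res_\ell$ with kernel $\Sha(E/\mathbb{Q})[\ell]$ is a direct version of the paper's snake-lemma identification $\operatorname{Coker}F\cong\Ker H\hookrightarrow M$, and your Herbrand-quotient count via an open subgroup $E_n(\mathbb{Q}_\ell)\cong\mathbb{Z}_\ell$ just proves the consequence $\#E(\mathbb{Q}_\ell)/\ell E(\mathbb{Q}_\ell)=\ell\,\#E(\mathbb{Q}_\ell)[\ell]$ of Mattuck's theorem, which the paper cites and then combines with local Tate duality as you do; one small slip is that $\Sha(E/\mathbb{Q})[\ell]$ is a quotient of $\mathrm{Sel}^{\ell}(E/\mathbb{Q})$, not a subgroup, though finiteness follows either way.
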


\begin{proof}

We now consider the following commutative diagram with exact rows and columns.

\tikzcdset{
  arrow style=tikz,
  diagrams={>=Straight Barb}
}

\begin{figure}[H]
\centering

{\scriptsize
\begin{tikzcd}[row sep=2.5em, column sep=3.5em, /tikz/font=\normalsize]
  & 0 \arrow[r] \arrow[d]
    & 0 \arrow[r] \arrow[d]
      & \mathrm{Ker}H \arrow[d] \\
  & \Sha(E/\Bbb{Q})[\ell] \arrow[r] \arrow[d,"F"]
    & H^1(\Bbb{Q},E)[\ell]
        \arrow[r, "\bigoplus_p \mathrm{res}_p"] \arrow[d,"="]
      & \displaystyle\bigoplus_{p}\mathrm{res}_p\bigl(H^1(\Bbb{Q},E)[\ell]\bigr)
          \arrow[d,"  H  "] \arrow[r]
        &[-8em] 0 \\ 
  0 \arrow[r]
    & \Sha(E/\Bbb{Q},M)[\ell] \arrow[r] \arrow[d]
    & H^1(\Bbb{Q},E)[\ell]
        \arrow[r] \arrow[d]
      & 
\Big(\bigoplus_{p\le \infty,\ p\neq \ell} H^1(\mathbb{Q}_p,E)[\ell]\Big)\oplus\big(H^1(\mathbb{Q}_\ell,E)[\ell]/M\big)
\Big)     \\
  & \mathrm{Coker}F \arrow[r] \arrow[d]
    & 0 \\
  & 0 
\end{tikzcd}
}
\end{figure}

Here \(H\) is the restriction of the homomorphism
\[
H:\bigoplus_{p} H^{1}(\Q_{p},E)[\ell]\;\longrightarrow\;
\Bigl(\bigoplus_{p\neq \ell} H^{1}(\Q_{p},E)[\ell]\Bigr)\;\oplus\;\bigl(H^{1}(\Q_{\ell},E)[\ell]/M\bigr),
\]
which is the identity on the summands for \(p\neq \ell\) and is the quotient map
\(q:H^{1}(\Q_{\ell},E)[\ell]\to H^{1}(\Q_{\ell},E)[\ell]/M\) on the \(\ell\)-summand.

By applying the snake lemma, we have
\[
\#\operatorname{Coker} F
 = \#\mathrm{Ker}H.
\]

Note that $\bigoplus_{p}\mathrm{res}_p\bigl(H^1(\Q,E)[\ell]\bigr)$ is a subgroup of
$\bigoplus_{p} H^1(\Q_p,E)[\ell]$ and hence \[\#\Ker(H)\le \#M.\]

We finaly prove that \(\#M \le \ell\,\#E(\Bbb Q_{\ell})[\ell]\). By local Tate duality we have
 $\#H^1(\mathbb{Q}_\ell,E)[\ell]
 = \#\bigl(E(\mathbb{Q}_\ell)/\ell E(\mathbb{Q}_\ell)\bigr)$. By Mattuck's theorem (\cite{Mattuck}), $E(\mathbb{Q}_\ell)\cong \mathbb{Z}_\ell \times E(\mathbb{Q}_\ell)_{\mathrm{tor}}$,
and hence $E(\mathbb{Q}_\ell)/\ell E(\mathbb{Q}_\ell)
 \cong \mathbb{Z}_\ell/\ell\mathbb{Z}_\ell \times E(\mathbb{Q}_\ell)[\ell]$,
so that $\#\bigl(E(\mathbb{Q}_\ell)/\ell E(\mathbb{Q}_\ell)\bigr)
 = \#\bigl(\mathbb{Z}_\ell/\ell\mathbb{Z}_\ell\bigr)\cdot\#E(\mathbb{Q}_\ell)[\ell]
 = \ell\,\#E(\mathbb{Q}_\ell)[\ell]$. Therefore $\#M\le \ell\,\#E(\mathbb{Q}_\ell)[\ell]$.
\end{proof}

\FloatBarrier

\begin{remark}

Let us consider the following commutative diagram with exact rows and columns. 
The first column is exact by Appendix~2 of \cite{Ca} or Theorem~3.3 of \cite{behavior}.

\begin{figure}[H]
\centering
{\scriptsize
\begin{tikzcd}[row sep=3em, column sep=2em, /tikz/font=\normalsize]
  & \Sha(E/\Bbb{Q})[\ell] \arrow[r] \arrow[d,"F"]
    & H^1(\Bbb{Q},E)[\ell] \arrow[r,"\bigoplus \mathrm{res}_p"] \arrow[d,"="]
      & \displaystyle\bigoplus_{p}H^1(\Bbb{Q}_p,E)[\ell]
        \arrow[d,"\text{proj}"]
        \arrow[r,"\pi"]
        &[-8em] 
      \!\mathrm{Sel^{\ell}}(E/\Bbb{Q})^* \arrow[r]
        &[-0.5em] 0 \\ 
  0 \arrow[r]
  & \Sha(E/\Bbb{Q},M)[\ell] \arrow[r]
    & H^1(\Bbb{Q},E)[\ell] \arrow[r]
      & \displaystyle
        \Big(\bigoplus_{p\le \infty,\ p\neq \ell} H^1(\mathbb{Q}_p,E)[\ell]\Big)
        \oplus
        \big(H^1(\mathbb{Q}_\ell,E)[\ell]/M\big)
      & &
\end{tikzcd}
}
\end{figure}

Let
\[
i_\ell:\ H^1(\Bbb Q_\ell,E)[\ell]\hookrightarrow \bigoplus_{p} H^1(\Bbb Q_p,E)[\ell]
\]
be the natural injection. Then we have
\[
\Ker H
\cong
\{\,x\in M\mid \pi(i_\ell(x))=0\,\}.
\]

Thus, if $\Ker H$ is a proper subgroup of $M$, then
$\Sel^{\ell}(E/\Q)\neq 0$, since $\pi\circ i_{\ell}$ is nonzero. Therefore, in our most situation where $\mathrm{rank}(E/\Q)=0$ and $E(\Bbb{Q})[3]=0$, proving
\[
\Ker H \subsetneq  M
\]
is more difficult than showing $\Sha(E/\Q)[\ell]\neq 0$ by visibility.

\end{remark}

\vskip\baselineskip

\FloatBarrier

\subsection{Visibility for twisted elliptic curves at bad reduction}

In this section, we prove a visibility theorem that works under the condition $E(\mathbb{Q}_{\ell})[\ell] = 0$. As indicated in Remark \ref{localtorsion}, this property can always be satisfied by taking a quadratic twist with an appropriate $D$. Of course, it is not generally clear whether such a twist can be chosen to be compatible with other conditions of Theorem \ref{main}, such as the rank. To verify this in concrete examples, we shall use Theorem \ref{lem:RP}.

\begin{theorem}\label{main}[A visibility theorem for twisted elliptic curves, including primes of bad reduction]
Let $E,F$ be elliptic curves over $\mathbb{Q}$.
Fix an odd prime number $\ell$.
Assume that $(E,F)$ satisfies the following conditions:
\begin{enumerate}
  \item $E[\ell]=F[\ell]$ as $G_{\mathbb{Q}}$-modules.

  \item \leavevmode
  $\displaystyle
  \begin{aligned}[t]
  &\gcd\!\left(
    \ell,\;
    \prod_{p<\infty} c_{E/\mathbb{Q}_p}\cdot c_{F/\mathbb{Q}_p}
  \right)=1, E(\mathbb{Q}_\ell)[\ell]=0, \operatorname{rank}(F/\mathbb{Q})-\operatorname{rank}(E/\mathbb{Q})\ge 2.
  \end{aligned}
  $
\end{enumerate}

Then, if $D\in \mathbb{Z}$ is a square-free integer such that $(E^D,F^D)$ satisfies {\rm(2)}, we have
\[
\Sha(E^D/\mathbb{Q})[\ell]\neq 0.
\]
\end{theorem}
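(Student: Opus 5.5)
Since $(E^D,F^D)$ again satisfies (2), and $E^D[\ell]\cong F^D[\ell]$ as $G_{\mathbb{Q}}$-modules (twisting both by the same quadratic character preserves the isomorphism $\theta$), we may rename $E^D,F^D$ as $E,F$ and prove $\Sha(E/\mathbb{Q})[\ell]\neq 0$ under (1) and (2). The plan is to bound the image of $\varphi$ from below, and the gap between $\Sha(E/\mathbb{Q},\operatorname{Vis})[\ell]$ and $\Sha(E/\mathbb{Q})[\ell]$ from above, as in the figure preceding Proposition~\ref{V}.

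\emph{Setup.} Form $J=(E\times F)/\Delta$ as in Definition~\ref{J}. Then $E\cap F=E[\ell]$ is finite, and $E[\ell]=F[\ell]$ inside $J$. Let $\varphi\colon F(\mathbb{Q})/\ell F(\mathbb{Q})\to H^1(\mathbb{Q},E)[\ell]$ be the map of Remark~\ref{commutative}, i.e.\ $h\circ\theta\circ\kappa$.

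\emph{Lower bound.} Since $\ell\nmid c_{F/\mathbb{Q}_p}$ we have $F(\mathbb{Q})/\ell F(\mathbb{Q})$ of order at least $\ell^{\operatorname{rank}F}$. By Remark~\ref{commutative}, $\operatorname{Ker}\varphi=\kappa^{-1}(\theta^{-1}(\operatorname{Ker}h))$. Here $\operatorname{Ker}h$ is the image of $E(\mathbb{Q})/\ell E(\mathbb{Q})$, of order at most $\ell^{\operatorname{rank}E}\#E(\mathbb{Q})[\ell]$, and $\kappa$ is injective. Moreover $E(\mathbb{Q})[\ell]\subset E(\mathbb{Q}_\ell)[\ell]=0$. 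Hence
\[
\#\operatorname{Im}\varphi\ \ge\ \ell^{\operatorname{rank}F-\operatorname{rank}E}\ \ge\ \ell^{2}.
\]
(If one prefers to cite Theorem~\ref{H^1}, its torsion hypothesis follows from $F(\mathbb{Q})[\ell]\cong E(\mathbb{Q})[\ell]=0$ via $\theta$. I would argue directly through Remark~\ref{commutative} to avoid checking the condition on $J/F$.)

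\emph{Upper bound.} By Theorem~\ref{rewrite}, which uses the Tamagawa hypothesis together with Cases 1 and 2 of \cite{visibility} at $\infty$ and at $p\neq\ell$, we get $\operatorname{Im}\varphi\subset \Sha(E/\mathbb{Q},\operatorname{Vis})[\ell]$. By Proposition~\ref{V} and $E(\mathbb{Q}_\ell)[\ell]=0$,
\[
\#\Sha(E/\mathbb{Q},\operatorname{Vis})[\ell]\ \le\ \ell\,\#\Sha(E/\mathbb{Q})[\ell].
\]
Combining the two bounds gives $\ell^2\le \ell\,\#\Sha(E/\mathbb{Q})[\ell]$, so $\#\Sha(E/\mathbb{Q})[\ell]\ge \ell$. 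In fact this only needs a rank difference of $1$, and the hypothesis $\ge 2$ leaves room to spare.

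\emph{Main obstacle.} The delicate step is the inclusion $\operatorname{Im}\varphi\subset\Sha(E/\mathbb{Q},\operatorname{Vis})[\ell]$. One must check that the local arguments of \cite{visibility} at $p\neq\ell$ go through at primes of additive reduction, using only $\ell\nmid c_{E/\mathbb{Q}_p}c_{F/\mathbb{Q}_p}$ and no hypothesis on the bad reduction of $J$. The way to do this is through the Kummer-map description: for $p\neq\ell$, $\operatorname{Im}\kappa_{E,p}=H^1_{\mathrm{nr}}=\operatorname{Im}\kappa_{F,p}$ (Lemma 3.1 and Proposition 3.2 of \cite{descent}), so $h\circ\theta$ kills $\kappa_{F,p}(F(\mathbb{Q}_p))$. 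At $\ell$ no control is needed, since $M=\operatorname{Im}(\operatorname{res}_\ell\circ\varphi)$ is simply absorbed. Its size is bounded by $\#H^1(\mathbb{Q}_\ell,E)[\ell]=\ell$, and this is exactly where $E(\mathbb{Q}_\ell)[\ell]=0$ is used.
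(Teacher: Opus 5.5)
Your proof is correct and follows essentially the paper's argument. You reduce to $(E,F)$ via $E^D[\ell]\cong E[\ell]\otimes\chi_D$, show $\#\operatorname{Im}\varphi\ge \ell^{\operatorname{rank}F-\operatorname{rank}E}$, place $\operatorname{Im}\varphi$ inside $\Sha(E/\mathbb{Q},\operatorname{Vis})[\ell]$ by Theorem~\ref{rewrite}, and finish with Proposition~\ref{V}. Bounding $\operatorname{Ker}\varphi$ through the Kummer description of Remark~\ref{commutative}, and checking $p\neq\ell$ via Schaefer--Stoll instead of citing Agashe--Stein, is only a cosmetic variation; it does let you skip the check on $(J/F)(\mathbb{Q})$.

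Your closing remark, however, is wrong. The two bounds combine to $\#\Sha(E/\mathbb{Q})[\ell]\ge \ell^{\operatorname{rank}F-\operatorname{rank}E-1}$, so a rank difference of $1$ gives only the vacuous bound $\ge 1$. Relaxing the condition at $\ell$ costs exactly one factor of $\ell$, so the hypothesis $\ge 2$ is used with no room to spare.

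A smaller point: $\#F(\mathbb{Q})/\ell F(\mathbb{Q})\ge \ell^{\operatorname{rank}F}$ holds unconditionally, not because $\ell\nmid c_{F/\mathbb{Q}_p}$.
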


\begin{proof}

Take $J$ as in the Definition \ref{J}. Note that for condition (1) in Theorem~\ref{H^1}, the identity component of $E \cap F$ has dimension zero; hence $E \cap F$ is finite.

Then there is an exact sequence of abelian varieties over $\mathbb{Q}$
\[
0 \longrightarrow F \xrightarrow{i} J \xrightarrow{\pi} E \longrightarrow 0,
\]
where $i \colon F \to J$ is given by $Q \mapsto [(0,Q)]$, and $\pi \colon J \to E$ is given by $[(P,Q)] \mapsto \ell P$. In particular, $J/F \cong E$ over $\mathbb{Q}$, and hence
\[
(J/F)(\mathbb{Q}) \cong E(\mathbb{Q}).
\]
We have $(J/F)(\Bbb{Q})[\ell]=E(\Bbb{Q})[\ell]=0$. Also, $F(\Bbb{Q})[\ell]=E(\Bbb{Q})[\ell]=0$. By Theorem~\ref{rewrite} and Theorem \ref{H^1}, we have a homomorphism $\varphi : F(\mathbb{Q})/\ell F(\mathbb{Q}) \longrightarrow \Sha(E/\Bbb{Q},\text{Vis})[\ell]$ such that \[\#\mathrm{Ker}(\varphi) \le \ell^{\operatorname{rank} E(\mathbb{Q})}.\]
Therefore,
\[
  \#\Sha(E/\Bbb{Q},\text{Vis})[\ell]\ge \ell^{\operatorname{rank} F(\mathbb{Q}) - \operatorname{rank} E(\mathbb{Q})} \ge \ell^2.
\]
Since $\frac{\#\Sha(E/\Bbb{Q},\text{Vis})[\ell]}{\#\Sha(E/\mathbb{Q})[\ell]} \le \ell \,\#E(\mathbb{Q}_{\ell})[\ell]$ by Proposition~\ref{V}, we have
\[
  \#\Sha(E/\mathbb{Q})[\ell] \ge
\frac{\#\Sha(E/\Bbb{Q},\text{Vis})[\ell]}{\ell} \ge \ell.
\]
 It is enough to prove that $(E^D,F^D)$ again satisfies {\rm (1)} in $J^D$.
As \(G_{\mathbb{Q}}\)-modules, we have
\[
E^{D}[\ell]\cong E[\ell]\otimes \chi_{D},\qquad 
F^{D}[\ell]\cong F[\ell]\otimes \chi_{D},
\]
where \(\chi_{D}\) is the quadratic character attached to \(\mathbb{Q}(\sqrt{D})/\mathbb{Q}\) (see Lemma 2.2 of \cite{CF}).

\end{proof}

To check whether \(E(\mathbb{Q}_\ell)[\ell]=0\), we use the following theorem.

\begin{theorem}[\cite{RP}]\label{lem:RP}
Let \(E/\mathbb{Q}_p\) be an elliptic curve given by a minimal Weierstrass equation
\[
  y^2 + a_1xy + a_3y = x^3 + a_2x^2 + a_4x + a_6,
\]
and suppose that \(E/\mathbb{Q}_p\) has additive reduction at \(p\) and \(a_i \in p\mathbb{Z}_p\) for \(i\in\{1,2,3,4,6\}\).
Then:
\begin{enumerate}
  \item If \(p=3\),
  \[
    E_0(\mathbb{Q}_3)[3]\neq 0 \quad\Longleftrightarrow\quad a_2 \equiv 6 \bmod{9}.
  \]

 \item  If \(p=5\),
  \[
    E_0(\mathbb{Q}_5)[5]\neq 0 \quad\Longleftrightarrow\quad a_4 \equiv 10 \bmod{25}.
  \]

\item  If \(p=7\),
  \[
    E_0(\mathbb{Q}_7)[7]\neq 0 \quad\Longleftrightarrow\quad a_6 \equiv 14 \bmod{49}.
  \]

\end{enumerate}
\end{theorem}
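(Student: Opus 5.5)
The plan is to reduce the statement to deciding whether one explicit extension of groups splits, and then to decide this by a $p$-adic computation with the $p$-division polynomial. Since $E/\mathbb{Q}_p$ has additive reduction and all $a_i\in p\mathbb{Z}_p$, the reduced curve is the cusp $y^2=x^3$. Its nonsingular locus is isomorphic to $\mathbb{G}_a$, so
\[
0\longrightarrow E_1(\mathbb{Q}_p)\longrightarrow E_0(\mathbb{Q}_p)\longrightarrow \mathbb{F}_p\longrightarrow 0 .
\]
By the standard formal group theory (Silverman IV, VII), $E_1(\mathbb{Q}_p)\cong \hat{E}(p\mathbb{Z}_p)$. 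For $p\ge 3$ we have $v(p)=1<p-1$, so the formal logarithm gives an isomorphism $\hat{E}(p^n\mathbb{Z}_p)\cong p^n\mathbb{Z}_p$ for all $n\ge1$. Hence $E_1(\mathbb{Q}_p)\cong\mathbb{Z}_p$ is torsion free, and $pE_1(\mathbb{Q}_p)=\hat E(p^2\mathbb{Z}_p)$.

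The extension above is therefore either $\mathbb{Z}_p$ (nonsplit) or $\mathbb{Z}_p\times\mathbb{F}_p$ (split). Consequently $E_0(\mathbb{Q}_p)[p]\neq0$ if and only if the extension splits. Equivalently: for any $P\in E_0(\mathbb{Q}_p)\setminus E_1(\mathbb{Q}_p)$, the point $[p]P\in E_1(\mathbb{Q}_p)$ lies in $\hat E(p^2\mathbb{Z}_p)$, i.e.\ $v_p(x([p]P))\le -4$, rather than having $v_p(x([p]P))=-2$ exactly. So the criterion becomes a congruence condition modulo $p^2$ on the coefficients, which is what the statement asserts.

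To evaluate this congruence I would work directly with the $p$-division polynomial $\psi_p$ and its Newton polygon, writing $a_i=p\alpha_i$. A nonzero $p$-torsion point of $E_0$ not in $E_1$ has $x$-coordinate of valuation $0$ or larger, and reduces to a nonsingular point of the cusp, so $y\not\equiv0$. For $p=3$ this is completely explicit:
\[
\psi_3=3x^4+b_2x^3+3b_4x^2+3b_6x+b_8 ,
\]
with $b_2=a_1^2+4a_2$, $b_4=2a_4+a_1a_3$, $b_6=a_3^2+4a_6$, $b_8\in 9\mathbb{Z}_3$ (up to higher-order terms). After the substitution $x=3u$, or $x$ a unit as appropriate, one reads off which root can give a point of $E_0$. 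The existence of such a root in $\mathbb{Q}_3$ (via Hensel's lemma), together with $y\in\mathbb{Q}_3$, should reduce to $b_2/3\equiv$ a specific residue mod $3$, i.e.\ $4a_2\equiv 6\pmod 9$ and hence $a_2\equiv 6\pmod 9$. For $p=5,7$ I would do the same computation with $\psi_5,\psi_7$. Here the dominant coefficients modulo $p^2$ turn out to be governed by $a_4$ (weight $4=p-1$) and $a_6$ (weight $6=p-1$) respectively. This matches the weight pattern: the relevant Eisenstein-like coefficient has weight $p-1$. The target condition is $a_{p-1}\equiv 2p\pmod{p^2}$.

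Alternatively, one can avoid division polynomials. Parametrize a lift $P$ of a generator by $t=-x/y$ with $v(t)=0$, and compute $[p]$ on the curve to first order in $p$. Since $[p]$ on $\mathbb{G}_a$ is $0$, the correction term is $p$ times a polynomial expression in $\alpha_i$ and $t$, plus the "Frobenius-type" term $t^p$ coming from the Witt-vector carry. Splitting happens exactly when this correction vanishes mod $p$ for suitable $t$. I expect the correction to be linear in $\alpha_{p-1}$ plus a constant equal to $-2$.

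The main obstacle is this explicit mod-$p^2$ computation of $[p]P$, or of the relevant coefficients of $\psi_p$, for $p=5$ and $7$. These are too large to handle cleanly by hand, and I would verify them with computer algebra. A second delicate point is checking, in the "if" direction, that the root obtained by Hensel's lemma yields a $y$-coordinate in $\mathbb{Q}_p$ and a point in $E_0$ rather than only in $E$. This requires tracking the square-root condition, which is where $p$ odd and the unit value of $y$ modulo $p$ are used.
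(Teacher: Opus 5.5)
Your proposal is correct. Its reduction is exactly the criterion the paper invokes: since $E_1(\mathbb{Q}_p)\cong\mathbb{Z}_p$ for $p\ge 3$, we have $E_0(\mathbb{Q}_p)[p]\neq 0$ iff $[p]$ sends $E_0(\mathbb{Q}_p)\setminus E_1(\mathbb{Q}_p)$ into $pE_1(\mathbb{Q}_p)=\hat E(p^2\mathbb{Z}_p)$. The paper phrases this as $v_p([p](z))\ge 2$ for unit $z$ and then defers the computation to Pannekoek.

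You differ in how the computation is carried out.

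\textbf{Pannekoek's route.} This is essentially your ``alternative'', and it stays inside the formal group. Since all $a_i\in p\mathbb{Z}_p$ and the degree-$n$ coefficients of the group law are isobaric of weight $n-1$, everything converges on $\mathbb{Z}_p$ and $\hat E(\mathbb{Z}_p)\cong E_0(\mathbb{Q}_p)$. Modulo $p^2$ only $pz$ and the terms linear in the $a_i$ survive, and the logarithm gives $[p](z)\equiv pz+\tfrac{p-1}{2}a_{p-1}z^p \pmod{p^2}$. Hence $[p](z)/p\equiv z(1-\alpha_{p-1}/2)\pmod p$ for unit $z$. So $\alpha_{p-1}$ sits on the $z^p$ term and the constant comes from $pz$, not from a separate carry. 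This argument is uniform in $p$ and never involves square roots. It also explains why the statement stops at $p=7$: for $p\ge 11$ there is no $a_{p-1}$, so $E_0(\mathbb{Q}_p)[p]=0$.

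\textbf{Your division-polynomial route.} This locates the torsion points themselves. For $p=3$, on units $\psi_3/3\equiv x^3(x+\alpha_2)\pmod 3$; the relevant $x$ are exactly units, so the substitution $x=3u$ is never needed. The simple root $x\equiv-\alpha_2$ lifts by Hensel, and $y\in\mathbb{Q}_3$ iff $x$ is a square mod $3$. That condition is precisely what separates $\alpha_2\equiv 2$ from $\alpha_2\equiv 1$. This gives a complete elementary proof of the only case the paper actually uses.

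For $p=5,7$ you can avoid brute-force computer algebra:
\begin{itemize}
\item Modulo $p$, $\psi_p$ is a polynomial in $x^p$ of degree $p(p-1)/2$.
\item By weights, its only term linear in the coefficients is therefore $\kappa\,a_{p-1}x^{p(p-1)/2}$, where $\kappa\equiv\tfrac{p-1}{2}$ is the linear part of the Hasse invariant.
\item Hence on units $\psi_p/p\equiv x^{p(p-1)/2}\bigl(x^{(p-1)/2}-\alpha_{p-1}/2\bigr)\pmod p$.
\item The quadratic-residue condition for $y$ forces $x^{(p-1)/2}\equiv 1$, i.e.\ $\alpha_{p-1}\equiv 2$.
\end{itemize}
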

\begin{proof}
We use the fact that \[
E_0(\mathbb{Q}_p)[p]\neq 0
\quad\Longleftrightarrow\quad
v_p([p](z))\ge 2,\ \forall z\in \widehat{E}(\mathbb{Z}_p)\setminus \widehat{E}(p\mathbb{Z}_p).
\]
Here $[p](z)$ denotes the multiplication-by-$p$ map on the formal group $\widehat{E}$. For the computation of the formal group and for a proof of this theorem, see \cite{RP}.\end{proof}

\begin{remark}\label{localtorsion}
Let $\ell$ be an odd prime number. Let $E/\mathbb{Q}_{\ell}$ be an elliptic curve defined over $\mathbb{Q}_{\ell}$. There exist infinitely many square-free integers $D$ such that $E^D(\mathbb{Q}_{\ell})[\ell]=0$. Indeed, the group $E^D(\mathbb{Q}_{\ell})[\ell] \cong (E[\ell]\otimes \chi_D)^{G_{\mathbb{Q}_{\ell}}}$ is non-trivial if and only if
$\mathrm{Hom}_{G_{\mathbb{Q}_{\ell}}}(\chi_D^{-1},E[\ell])\neq 0$; since $\chi_D$ is quadratic we have $\chi_D^{-1}=\chi_D$, hence this holds if and only if
$E[\ell]$ contains a one-dimensional $G_{\mathbb{Q}_{\ell}}$-subrepresentation isomorphic to $\chi_D$.
As $\dim_{\mathbb
{F}_\ell}E[\ell]=2$, there are at most two quadratic characters $\chi$ such that
$(E[\ell]\otimes \chi)^{G_{\mathbb{Q}_{\ell}}}\neq 0$. On the other hand, for odd $\ell$ we have $\#\bigl(\mathbb{Q}_\ell^\times/(\mathbb{Q}_\ell^\times)^2\bigr)=4$, hence there exists a quadratic character
$\chi_0$ such that
$(E[\ell]\otimes \chi_0)^{G_{\mathbb{Q}_{\ell}}}=0$. Since each square class in $\mathbb{Q}_\ell^{\times}/(\mathbb{Q}_\ell^{\times})^{2}$ contains infinitely many square-free integers, choosing one of the good classes yields infinitely many square-free integers $D$ such that $E^D(\mathbb{Q}_\ell)[\ell]=0$.

\end{remark}

\section{Non-isomorphic pairs of elliptic curves with identical BSD invariants, Kodaira symbols, minimal discriminants, and nontrivial Tate–Shafarevich groups}

For the case $\ell=3$ in which the BSD data are shared—namely, $(E_1 = 38025.\mathrm{ck}1,\; F = 38025.\mathrm{i}1)$—we will, as a representative example, carefully verify the assumptions of Theorem \ref{main} one by one and provide a detailed proof. For the sake of this purpose, we list two square-free integers \(D\) for which
\(\mathrm{rank}(F^D/\mathbb{Q}) - \mathrm{rank}(E_1^D/\mathbb{Q}) \ge 2\) for \((E_1,F) = (38025.\mathrm{ck}1, 38025.\mathrm{i}1)\).

\begin{example}\label{rank}
The quadratic twists of $E_1\coloneqq 38025.ck1$ by $D = 6977$ and $D = 23297$, given by
\[
\begin{aligned}
E^{6977}_1: y^2 &= x^3 - 10306990931527875 x + 402739148904281876618750,\\
E^{23297}_1: y^2 &= x^3 - 114919690409047875 x + 14994005922657755498618750,
\end{aligned}
\]
have rank $0$.

The quadratic twists of the elliptic curve $F\coloneqq 38025.i1$ by $D = 6977$ and $D = 23297$ are
\[
\begin{aligned}
F^{6977} &: y^2 + y = x^3 - 46521826772655x - 122161581370183348094,\\
F^{23297} &: y^2 = x^3 - 8299258575844080x - 291077375250029226118000,
\end{aligned}
\]
and both have rank~$2$.

This is verified using PARI/GP. The output shows $\mathrm{rank}(F^{6977}/\Bbb{Q})=2$. 
For example, one rational point of infinite order on $F^{6977}$ is
\[
\left(x,y\right)
=
\left(
\frac{7600015680280}{609961},
\frac{16724543722010247982}{476379541}
\right).
\]

\end{example}

\begin{proposition}\label{tamagawa3}
Let $E/\Bbb{Q}$ be an elliptic curve. Except in the cases where the Kodaira symbol at $p$ is $I_{3n}(n\ge 1)$, $IV$, or $IV^*$, $3$ does not divide the Tamagawa number $c_{E/\Bbb{Q}_p}$.

\end{proposition}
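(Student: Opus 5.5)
The plan is to argue case by case through Tate's classification of the possible component groups $\Phi_E(\overline{\mathbb{F}}_p)$ and the corresponding Tamagawa numbers $c_{E/\mathbb{Q}_p}=\#\Phi_E(\mathbb{F}_p)$. The key input is that $\Phi_E(\mathbb{F}_p)$ is a subgroup of the geometric component group $\Phi_E(\overline{\mathbb{F}}_p)$. Hence $c_{E/\mathbb{Q}_p}$ divides $\#\Phi_E(\overline{\mathbb{F}}_p)$, and it suffices to show that $3\nmid \#\Phi_E(\overline{\mathbb{F}}_p)$ outside the listed Kodaira types.

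First, list the geometric component groups from Tate's algorithm (as tabulated, e.g., in Silverman's \emph{Advanced Topics}, IV.9). For good reduction $I_0$ the group is trivial. For $I_n$ it is $\mathbb{Z}/n\mathbb{Z}$. For types $II$ and $II^*$ it is trivial. For $III$ and $III^*$ it is $\mathbb{Z}/2\mathbb{Z}$. For $IV$ and $IV^*$ it is $\mathbb{Z}/3\mathbb{Z}$. For $I_n^*$ it is $\mathbb{Z}/4\mathbb{Z}$ or $(\mathbb{Z}/2\mathbb{Z})^2$, so it has order $4$.

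Next, go through these types. For $I_n$ with $3\nmid n$, the order $n$ is prime to $3$. For $I_0$, $II$, $II^*$, $III$, $III^*$ and $I_n^*$, the orders are $1$, $2$ or $4$, all prime to $3$. The only remaining types are $I_{3n}$ ($n\ge1$), $IV$ and $IV^*$, which are exactly the excluded ones. Equivalently, one can read the possible values of $c_{E/\mathbb{Q}_p}$ directly from the table of Tamagawa numbers: $c\in\{1,2,4\}$ for $I_n^*$, $c\in\{1,2\}$ for $III$, $III^*$ and $I_n$ with $n$ even in the non-split case, $c=1$ for $II$, $II^*$ and $I_n$ with $n$ odd in the non-split case, and $c=n$ for split $I_n$.

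There is no real obstacle here. The only point needing care is to justify that $c_{E/\mathbb{Q}_p}$ divides the geometric order, or else to quote the explicit table of $\mathbb{F}_p$-rational values. I would cite Tate's algorithm for this rather than reprove it, noting that the divisibility holds because $\Phi_E(\mathbb{F}_p)=\Phi_E(\overline{\mathbb{F}}_p)^{\mathrm{Gal}(\overline{\mathbb{F}}_p/\mathbb{F}_p)}$ is a subgroup.
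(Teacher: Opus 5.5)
Your proposal is correct and takes the same approach as the paper. Both read off from Table 4.1 in IV.9 of Silverman that $3 \nmid \#\Phi_E(\overline{\mathbb{F}}_p)$ for every Kodaira type other than $I_{3n}$, $IV$ and $IV^*$, and then conclude because $c_{E/\mathbb{Q}_p}=\#\Phi_E(\mathbb{F}_p)$, where $\Phi_E(\mathbb{F}_p)=\Phi_E(\overline{\mathbb{F}}_p)^{\mathrm{Gal}(\overline{\mathbb{F}}_p/\mathbb{F}_p)}$ is a subgroup of $\Phi_E(\overline{\mathbb{F}}_p)$ — a divisibility step you state explicitly and the paper leaves implicit.
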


\begin{proof}
Since $3 \nmid \Phi_E(\bar{\Bbb{F}}_p)$ by IV.9, Table 4.1 of 
\cite{silad}, we have $3 \nmid \Phi_E(\Bbb{F}_p)=c_{E/\Bbb{Q}_p}$. 
\end{proof}

\begin{lemma}\label{twistlocal}
Let $E_1$ be the elliptic curve over $\mathbb{Q}$ with LMFDB label $38025\text{.ck1}$.
For a squarefree integer $D$ coprime to $3$, we have
\[
E_1^D(\mathbb{Q}_3)[3]=0
\iff
D\equiv 2 \bmod{3}.
\]
\end{lemma}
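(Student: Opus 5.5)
The plan is to read off the answer from Theorem~\ref{lem:RP}(1) applied to an explicit minimal model of $E_1^D$ at $3$. Then we pass from $E_0(\mathbb{Q}_3)[3]$ to $E(\mathbb{Q}_3)[3]$ using the fact that the Tamagawa number at $3$ is prime to $3$.

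\textbf{Step 1: an explicit model of $E_1^D$.} I would start from the model of $E_1$ in Table~1 (the case $D=1$):
\[
y^2=x^3+25350x^2+2471625x .
\]
Since $a_1=a_3=a_6=0$, the twisting formula of Section~2 gives
\[
E_1^D:\ y^2=x^3+25350D\,x^2+2471625D^2x .
\]
Factoring the coefficients gives $25350=3\cdot 2\cdot 5^2\cdot 13^2$ and $2471625=3^2\cdot 5^3\cdot 13^3$. Hence $a_2=3\cdot 8450\,D$ and $a_4=9\cdot 274625\,D^2$, so all $a_i\in 3\mathbb{Z}_3$. I would then check that this model is minimal at $3$ with $v_3(\Delta)=6$, i.e.\ Kodaira type $I_0^*$, which is the type recorded for $E_1$ in Table~1. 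For $D$ coprime to $3$ the twist does not change $v_3$ of $a_2,a_4,\Delta$, so the type $I_0^*$ and minimality persist. This verification (minimality at $3$, via Tate's algorithm) is the only point requiring care; everything else is arithmetic. In particular $E_1^D$ has additive reduction at $3$, and the hypotheses of Theorem~\ref{lem:RP}(1) hold.

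\textbf{Step 2: apply Theorem~\ref{lem:RP}(1).} By that theorem, $E_1^D{}_0(\mathbb{Q}_3)[3]\neq 0$ if and only if $a_2\equiv 6 \bmod 9$. This is equivalent to $8450D\equiv 2\bmod 3$. Since $8450\equiv 2 \bmod 3$, the condition becomes $D\equiv 1\bmod 3$. Hence, for $D$ coprime to $3$,
\[
E_{1,0}^D(\mathbb{Q}_3)[3]=0 \iff D\equiv 2\bmod 3 .
\]

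\textbf{Step 3: from $E_0$ to $E$.} Consider the exact sequence
\[
0\to E_0(\mathbb{Q}_3)\to E(\mathbb{Q}_3)\to \Phi(\mathbb{F}_3)\to 0 .
\]
For type $I_0^*$, Proposition~\ref{tamagawa3} gives that $c_{E_1^D/\mathbb{Q}_3}$ divides $4$; Table~1 in fact gives $c=2$. So $3\nmid c$, and multiplication by $3$ is bijective on $\Phi(\mathbb{F}_3)$. Any point $P$ with $3P=0$ therefore has trivial image in $\Phi(\mathbb{F}_3)$, so $P\in E_0(\mathbb{Q}_3)$. This gives
\[
E_1^D(\mathbb{Q}_3)[3]=E_{1,0}^D(\mathbb{Q}_3)[3],
\]
and combined with Step~2 this proves the equivalence.
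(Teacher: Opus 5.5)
Your proposal is correct and follows the paper's proof: you apply Pannekoek's criterion (Theorem~\ref{lem:RP}(1)) to a model of $E_1^D$ that is minimal at $3$ with all $a_i\in 3\mathbb{Z}_3$, and you pass from $E_0(\mathbb{Q}_3)[3]$ to $E(\mathbb{Q}_3)[3]$ using $3\nmid c_{E_1^D/\mathbb{Q}_3}$. The only difference is the model: you use the Table~1 model $y^2=x^3+25350Dx^2+2471625D^2x$, which is indeed $\mathbb{Q}$-isomorphic to $38025.\mathrm{ck}1$ when $D=1$ and is minimal at $3$ simply because $v_3(\Delta)=6<12$, so no Tate's algorithm is needed; the paper instead uses the shifted LMFDB model with $a_1=3$ and $a_2=(-3D-9)/4$, and both models give $a_2\equiv 6 \bmod 9 \iff D\equiv 1 \bmod 3$.
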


\begin{proof}

Since we work with a minimal Weierstrass model and the Tamagawa number is coprime to $3$, $E_1^D(\mathbb{Q}_3)[3]=0 \iff (E_1^D)_0(\mathbb{Q}_3)[3]=0$.
A minimal Weierstrass equation for $38025\text{.ck1}$ is
\[
y^2+xy=x^3-x^2-13233492x+18531699291.
\]
Substituting $y=y'+x$, we obtain
\[
y'^2+3xy'=x^3-3x^2-13233492x+18531699291.
\]
After twisting by a square-free integer $D$, the coefficient of $x^2$ becomes $\dfrac{-3D-9}{4}$.
Note that the quadratic twist by $D$ is still minimal at $3$ because $D$ is coprime to $3$.
Thus, $E^D(\mathbb{Q}_3)$ has nontrivial $3$-torsion if and only if $\dfrac{-3D-9}{4}\equiv 6 \pmod{9}$, by Theorem~\ref{lem:RP}(1).
Equivalently, $D\equiv 1 \pmod{3}$.

\end{proof}

\begin{theorem}
Let \(E_1\) and \(E_2\) be the elliptic curves with LMFDB labels \(38025.ck1\) and \(38025.ck2\), respectively, and let \(F\) be the elliptic curve with LMFDB label \(38025.i1\) \cite{lmfdb}.  
Let \(D\) be a square-free integer with \(D\equiv 2 \pmod{3}\), and suppose that  
\[
\operatorname{rank} (F^D/\mathbb{Q})-\operatorname{rank} (E_1^D/\mathbb{Q}) \ge 2.
\]
Then \(\Sha(E_1^D/\mathbb{Q})[3]\neq 0\).

In particular, for \(D=6977\) and \(D=23297\), we have \(\Sha(E_1^D/\mathbb{Q})[3]\neq 0\). Moreover, the pair \((E_1^D, E_2^D)\) satisfies $\text{BSD}(E_1^D/\Bbb{Q})=\text{BSD}(E_2^D/\Bbb{Q})$ and has the same Kodaira symbols at all primes and the same minimal discriminant, yet \(j(E_1)\neq j(E_2)\).

\end{theorem}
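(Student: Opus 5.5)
The plan is to apply Theorem \ref{main} to the pair $(E,F)=(E_1^D,F^D)$ with $\ell=3$, checking its hypotheses one by one. Three hypotheses are needed.

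\textbf{Congruence (1).} Example \ref{congruent} and Example \ref{isogeny} give $E_1[3]\cong F[3]$. By Lemma 2.2 of \cite{CF}, $E_1^D[3]\cong E_1[3]\otimes\chi_D\cong F[3]\otimes\chi_D\cong F^D[3]$.

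\textbf{Tamagawa numbers.} By Proposition \ref{tamagawa3}, it suffices to show that neither $E_1^D$ nor $F^D$ has Kodaira symbol $I_{3n}$, $IV$ or $IV^*$ at any prime. Both curves have conductor $38025=3^2\cdot5^2\cdot13^2$, so $E_1$ and $F$ have additive reduction at $3,5,13$ and good reduction elsewhere. I would handle the primes in three groups.
\begin{itemize}
\item At $p\mid D$ with $p\nmid 2\cdot 3\cdot 5\cdot 13$, a twist of good reduction is $I_0^*$, whose component group is $2$-torsion.
\item At $p\in\{3,5,13\}$ with $p\nmid D$, the twist is unramified at $p$, so the Kodaira symbol is unchanged. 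From LMFDB (or Table~1), $E_1$ has $I_0^*,III^*,III^*$ at these primes, and I would read off $F$'s symbols similarly, checking that none is $IV$ or $IV^*$.
\item When $p\mid D$ for $p\in\{2,5,13\}$, the twist is ramified at $p$ and the symbol changes, e.g.\ $III^*\leftrightarrow III$ and $I_0^*\leftrightarrow I_0$; at $2$ a good-reduction curve may acquire some additive type. The transitions $I_0\leftrightarrow I_0^*$ and $III\leftrightarrow III^*$ never produce $IV$ or $IV^*$, and potentially good reduction rules out $I_{3n}$. In all these cases the component group has order prime to $3$.
\end{itemize}
The prime $3$ never divides $D$ here, since $D\equiv2\pmod3$. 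To be safe I would verify the Tamagawa numbers of the two explicit twists $D=6977$ and $D=23297$ directly with PARI/GP.

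\textbf{Local torsion and rank.} Since $\gcd(D,3)=1$ and $D\equiv 2\pmod 3$, Lemma \ref{twistlocal} gives $E_1^D(\mathbb{Q}_3)[3]=0$. The rank condition is the hypothesis for general $D$, and Example \ref{rank} verifies it for $D=6977$ and $D=23297$, where the rank difference is $2-0=2$. Note that $6977\equiv 2$ and $23297\equiv 2\pmod 3$. With all hypotheses of Theorem \ref{main} checked, it gives $\Sha(E_1^D/\mathbb{Q})[3]\neq0$.

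\textbf{The pair $(E_1^D,E_2^D)$.} For $D\in\{6977,23297\}$ I would check that $D$ is prime with $D\equiv1\pmod 8$, $\bigl(\tfrac{D}{5}\bigr)=-1$ and $\bigl(\tfrac{D}{13}\bigr)=1$. Then \cite{shiga} (Table~1) gives $\mathrm{BSD}(E_1^D/\mathbb{Q})=\mathrm{BSD}(E_2^D/\mathbb{Q})$, including $\Sha(E_1^D/\mathbb{Q})\cong\Sha(E_2^D/\mathbb{Q})$, the same Kodaira symbols and minimal discriminants, and $j(E_1)=257^3\ne 17^3=j(E_2)$. The main obstacle is exactly this verification of the congruence conditions in Table~1 for both values of $D$: if either $D$ fails them, the isomorphism of Tate--Shafarevich groups would have to be obtained directly.

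A cheap fallback for the $\Sha$ isomorphism is the following. $E_1^D$ and $E_2^D$ are $2$-isogenous, so for the odd prime $3$ we have $\Sha(E_1^D/\mathbb{Q})[3^\infty]\cong\Sha(E_2^D/\mathbb{Q})[3^\infty]$. Hence $\Sha(E_2^D/\mathbb{Q})[3]\neq0$ as well, whichever route is used.
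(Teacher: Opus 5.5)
Your proposal follows the paper's proof essentially step by step. You apply Theorem~\ref{main} to the twisted pair $(E_1^D,F^D)$, which is in fact the cleaner reading, since the untwisted $E_1$ has $E_1(\mathbb{Q}_3)[3]\neq 0$ by Lemma~\ref{twistlocal} with $D=1$. You get the congruence via $\chi_D$, the Tamagawa condition via Kodaira symbols and Proposition~\ref{tamagawa3}, the local condition via Lemma~\ref{twistlocal}, and the BSD statement via \cite{shiga}; the congruence conditions you flagged as the main obstacle are routine, since $6977$ and $23297$ are primes with $D\equiv1\pmod 8$, $\bigl(\tfrac{D}{5}\bigr)=-1$, $\bigl(\tfrac{D}{13}\bigr)=1$, and your $2$-isogeny remark gives $\Sha(E_2^D/\mathbb{Q})[3]\neq 0$ independently.

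The one step asserted rather than proved is the prime $2$, and the paper shares this weakness: it claims type $I_0^*$ at every $p\nmid 3\cdot 5\cdot 13$ dividing $D$, which is false at $p=2$. The twist is ramified at $2$ whenever $D\not\equiv 1\pmod 4$, including the case $D\equiv 3\pmod 4$ that you omit. There Ogg's formula $f_2=v_2(\Delta_{\min})+1-m$, with $f_2=4$ or $6$ and $v_2(\Delta_{\min})\equiv 6v_2(D)\pmod{12}$, forces $m\equiv 9$ or $1\pmod{12}$. This excludes $IV$ and $IV^*$, which closes the gap.
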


\begin{proof}
Conditions (1) and (2) of Theorem \ref{main} follow from Example~\ref{isogeny}. Thus, by Theorem \ref{main} and Lemma \ref{twistlocal}, it is sufficient to prove that \[\text{gcd}(3, \prod_p {c_{E_1^D/\Bbb{Q}_p} \cdot c_{F^D/\Bbb{Q}_p}})=1.\]

$E_1$ and $F$ have reduction types $I_0^*(p=3), III^*(p=5), III^*(p=13)$, $I_3^*(p=3), III(p=5), III^*(p=13)$, respectively. Under quadratic twisting, the Kodaira types $I_0^*$ and $III^*$ cannot become
$IV$, $IV^*$, or $I_{3n}$ ($n \ge 1$). Also, $I_3^*$ cannot become $I_3$ by twisting by $D$ with $\text{gcd}(3,D)=1$. For a prime $p$ such that $\text{gcd}(p, 3\cdot 5 \cdot 13)=1$ and $p\mid D$,  the Kodaira type of $E^D/\Bbb{Q}$ at $p$ is $I_0^*$. 
Thus, at every prime $p$ the reduction
is never of type $IV$, $IV^*$ or $I_{3n}$, and by Proposition~\ref{tamagawa3}
we have
\[
  \gcd\bigl(3, \prod_p c_{E_1^D/\Bbb{Q}_p} \cdot c_{F^D/\Bbb{Q}_p}\bigr) = 1.
\]

We prove that $\text{BSD}(E^D_1/\Bbb{Q})=\text{BSD}(E^D_2/\Bbb{Q})$ and $\Sha(E^D_1/\Bbb{Q})[3]\cong \Sha(E^D_2/\Bbb{Q}) [3]\neq 0$. 

Let \(D\) be a prime with \(D \equiv 1 \pmod{8}\), \(\bigl(\tfrac{D}{5}\bigr) = -1\), \(\bigl(\tfrac{D}{13}\bigr) = 1\), and \(\gcd(D, 2 \cdot 3 \cdot 5 \cdot 13) = 1\). Then, by \cite{shiga}, \(\mathrm{BSD}(E^D_1/\mathbb{Q}) = \mathrm{BSD}(E^D_2/\mathbb{Q})\). The primes \(D = 6977\) and \(D = 23297\) satisfy these conditions, and for these primes we have
$\mathrm{rank}(F^D/\Bbb{Q})-\mathrm{rank}(E_1^D/\Bbb{Q})=2$ for $D=6977, 23297$ by example \ref{rank}.

Therefore, by Theorem~\ref{main}, we have
\[
\Sha(E_1^D/\mathbb{Q})[3] \cong \Sha(E_2^D/\mathbb{Q})[3] \neq 0.
\]
\end{proof}

\FloatBarrier

\begin{example}\label{last}
Let \(D\) be a prime such that \(D \equiv 1 \pmod{8}\), \(\left(\tfrac{D}{5}\right)=-1\), \(\left(\tfrac{D}{13}\right)=1\), and \(\gcd(D,2\cdot 3\cdot 5\cdot 13)=1\). Then the following pair of elliptic curves have the same BSD data, Kodaira symbols, and minimal discriminants. Moreover, if \(D \equiv 2 \pmod{3}\) and
\[
\operatorname{rank}(F^{D}/\mathbb{Q})-\operatorname{rank}(E_1^{D}/\mathbb{Q}) \ge 2,
\]
then \(\Sha(E_1^{D}/\mathbb{Q})[3]\neq 0\). For example, \(D=6977\) and \(D=23297\) satisfy these conditions. The following table shows the case $D=6977$.

\FloatBarrier


\begin{table}[htbp]
\centering

\label{tab:elliptic_curves_38025_b}
\begin{tabular}{lll}
    \toprule
    \textbf{Elliptic curve} &
    $E^{6977}_1:\ \begin{aligned}[t]
    y^2&=x^3+176866950x^2\\
    &\quad+120315069239625x
    \end{aligned}$ &
    $E^{6977}_2:\ \begin{aligned}[t]
    y^2&=x^3-353733900x^2\\
    &\quad+30800657725344000x
    \end{aligned}$ \\
    \midrule
    j-invariant& $257^3$  & $17^3$  \\  
    Mordell--Weil group & 
    $\mathbb{Z}/2\mathbb{Z}$ & $\mathbb{Z}/2\mathbb{Z}$ \\
    Regulator & 1 & 1 \\
    Real period & $0.025\cdots$     & $0.025\cdots$     \\
    Tamagawa number & $2(p=3),2(p=5), 2(p=13), 2(p=6977)$ & $2(p=3), 2(p=5), 2(p=13), 2(p=6977)$\\
    Kodaira symbol  & $I_0^*(p=3), III^*(p=5), III^*(p=13)$,  & $I_0^*(p=3), III^*(p=5), III^*(p=13)$ \\
    &  $I_0^* (p=6977)$ &  $I_0^* (p=6977)$\\

    \rowcolor{gray!20}
    $\Sha(E^D_i/\mathbb{Q})[2^{\infty}]$ & 0 & 0 \\
    \rowcolor{gray!20}
    $\Sha(E^D_i/\mathbb{Q})$ & \text{Nontrivial 3-torsion} & \text{Nontrivial 3-torsion} \\

    \bottomrule
\end{tabular}

\caption{Non-isomorphic pairs of elliptic curves with identical BSD invariants, Kodaira symbols, minimal discriminants, and nontrivial Tate–Shafarevich groups}
\end{table}

\end{example}

\FloatBarrier

\begin{remark}
For the Tate--Shafarevich group, we know that $\Sha(E_1^{6977}/\Bbb{Q})\cong \Sha(E_2^{6977}/\Bbb{Q})$ as a group by \cite{shiga}. 
Pairs of degree-2 isogenous elliptic curves that share the same BSD invariants, Kodaira symbols, and minimal discriminants, and that have nontrivial \(\Sha(E/\mathbb{Q})[3]\), are not in the LMFDB database. Indeed, up to quadratic twist, any such pair should be \((4225.h1,\,4225.h2)\).
Recent work \cite{twin} shows that, up to quadratic twists, this is the only pair of discriminant twins among
2-isogenous elliptic curves over $\Bbb{Q}$; (4225.h1, 4225.h2), which is the twist of our (38025.ck1, 38025.ck2) by \(-195\). However, every quadratic twist of \((4225.h1,\,4225.h2)\) currently recorded in the LMFDB
has a Tate--Shafarevich group with trivial 3-primary part.
\end{remark}

\FloatBarrier

\subsection{Acknowledgements}
I would like to express my heartfelt gratitude to Prof.\ Nobuo Tsuzuki for his many constructive discussions and continual encouragement throughout this work. I also thank Prof.\ Christian Wuthrich for insightful discussions at Y-RANT in Nottingham about my previous paper \cite{shiga} and the nontriviality of the Tate--Shafarevich group. I am also grateful to Prof. Go Yamashita for a question during the Number Theory Seminar at Kyoto University, which motivated Remark \ref{commutative}. I would also like to thank Yoshinori Kanamura for drawing my attention to several papers on the strong Hasse principle in connection with relaxing the local conditions defining the Tate–Shafarevich group. I would like to thank Naoto Dainobu for telling me the paper \cite{annals} and his helpful discussions regarding the argument in Proposition \ref{parityeven}.


\begin{thebibliography}{99}


\bibitem{2013} A. Agashe and S. Biswas, Constructing non-trivial elements of the Shafarevich--Tate group of an abelian variety over a number field, J. Number Theory 133 (2013), no. 6, 1977--1990.


\bibitem{visibility} A. Agashe and W. Stein, Visibility of Shafarevich–Tate groups of abelian varieties, J. Number Theory 97 (2002), no. 1, 171–185.

\bibitem{mazur} A. Agashe and W. Stein, Visible evidence for the Birch and Swinnerton–Dyer conjecture for modular abelian varieties of analytic rank zero, Math. Comp. 74 (2005), no. 249, 455–484.


\bibitem{twin} A. J. Barrios, M. Brucal-Hallare, A. Deines, P. Harris, M. Roy, Prime isogenous discriminant ideal twins, arXiv:2402.19183(2024).


\bibitem{aj} A.J Barrios, M. Roy, N. Sahajpal, D. Tallana, B. Tobin, H. Wiersema. Local data of elliptic curves under quadratic twist. Res. number theory 11, 75 (2025). https://doi.org/10.1007/s40993-025-00650-w




\bibitem{Bhargava} M. Bhargava, Z. Klagsbrun, R. J. Lemke Oliver and A. Shnidman, Elements of given order in Tate–Shafarevich groups of abelian varieties in quadratic twist families, Algebra Number Theory 15 (2021), no. 3, 627–655.



\bibitem{Ca} J.W.S. Cassels, Arithmetic on curves of genus 1. VII. The dual exact sequence. Journal für die reine und angewandte Mathematik 216: 150-158, 1964. 


\bibitem{CF} J. E. Cremona and N. Freitas, Global methods for the symplectic type of congruences between elliptic curves, Rev. Mat. Iberoam. 38 (2022), no. 1, 1--32, doi:10.4171/RMI/1269.





\bibitem{Ma} J. E. Cremona and B. Mazur, Visualizing elements in the Shafarevich–Tate group, Experiment. Math. 9 (2000), no. 1, 13–28.



\bibitem{parity} T.~Dokchitser and V.~Dokchitser,
\newblock On the Birch-Swinnerton-Dyer quotients modulo squares,
\newblock { Annals of Mathematics} {\bf 172} (2010), no.~1, 567--596.




\bibitem{Fisher}
T.~Fisher,
Visualizing elements of order $7$ in the Tate--Shafarevich group of an elliptic curve, LMS J. Comput. Math. 19 (Special Issue~A) (2016), 100--114.

\bibitem{flat}
K{\k e}stutis {\v C}esnavi{\v c}ius, Selmer groups as flat cohomology groups, J. Ramanujan Math. Soc.{31} (2016), no.~1, 31--61.




\bibitem{lmfdb} The LMFDB Collaboration. (2025). The L-functions and modular forms database. \url{https://www.lmfdb.org}. 




\bibitem{Mattuck}  A.~Mattuck, ``Abelian varieties over $p$-adic ground fields'', Ann.\ of Math. (2) \textbf{62} (1955), 92--119. MR \; Zbl.




\bibitem{annals} B. Mazur and K. Rubin, Finding large Selmer rank via an arithmetic theory of local constants, Ann. of Math. (2) 166 (2007), no. 2, 579–612.




\bibitem{RP} R. Pannekoek, On $p$-torsion of $p$-adic elliptic curves with additive reduction, arXiv:1211.5833 (2013).



\bibitem{descent} Edward F. Schaefer and Michael Stoll, “How to Do a p-Descent on an Elliptic Curve,” Transactions of the American Mathematical Society 356 (2004), no. 3, 1209–1231.



\bibitem{shiga} A. Shiga, Infinitely many pairs of non-isomorphic elliptic curves sharing the same BSD invariants, arXiv:2507.18574 (2025).



\bibitem{behavior} A. Shiga, Behaviors of the Tate--Shafarevich group of elliptic curves under quadratic field extensions, 2024, arXiv:2411.12316, to appear in Tokyo Journal of Mathematics.  




\bibitem{Ariel} A. Shnidman and A. Weiss, Elements of prime order in Tate–Shafarevich groups of abelian varieties over $\mathbb{Q}$, Forum Math. Sigma 10 (2022), e98, 1–10.




\bibitem{silad} J. H. Silverman, Advanced topics in the arithmetic of elliptic curves, Graduate Texts in Mathematics, vol. 151, Springer-Verlag, New York, 1994.


\bibitem{stein} W. Stein, Modular Forms, a Computational Approach (Graduate Studies in Mathematics, 79).

\bibitem{SW13}
W.~Stein and C.~Wuthrich,
\newblock Algorithms for the arithmetic of elliptic curves using Iwasawa theory,
\newblock Math.\ Comp. 82 (2013), no.~283, 1757--1792,
\newblock doi:\,10.1090/S0025-5718-2012-02649-4.




\bibitem{Sutherland}
Andrew V. Sutherland,
A local-global principle for rational isogenies of prime degree,
Journal de th\'eorie des nombres de Bordeaux 24 (2012), no. 2, 475--485,
doi:10.5802/jtnb.807.



\bibitem{Wiese2013}
G.~Wiese, Modular Galois Representations and Applications,
\newblock lecture notes (4 lectures held at the Higher School of Economics in Moscow, 2--4 April 2013), Version of 6 April 2013.
\newblock Available at \url{https://math.uni.lu/wiese/notes/2013-ModGalRepApp.pdf}\, (accessed 2025-12-20).




\end{thebibliography}
\end{document}